\newcommand{\kp}{\kappa}
\newcommand{\ka}{\equiv_{A}}
\newcommand{\kc}{\equiv_{C}}
\newcommand{\ktkl}{\kt_{k}(\l)}
\newcommand{\sun}{\text{Sun}\,}
\newcommand{\ssyt}{\te{SSYT}}
\newcommand{\equ}{\equiv}
\newcommand{\oli}{\ol{1}}
\newcommand{\olii}{\ol{2}}
\newcommand{\oliii}{\ol{3}}
\newcommand{\row}{\text{row}\,}
\newcommand{\col}{\text{col}\,}
\newcommand{\colt}{\text{col}\,T}
\newcommand{\rowt}{\text{row}\,T}
\renewcommand{\lg}{*(lightgray) }
\newcommand{\mg}{\infty}
\newcommand{\qte}[1]{\q\te{#1}}
\renewcommand{\b}{\mathbf{b}}\renewcommand{\r}{\mathbf{r}}
\newcommand{\R}{\mathcal{R}}
\newcommand{\dsum}{\di\sum}
\newcommand{\down}{\downarrow}
\newcommand{\wh}{\ensuremath{\widehat}}
\newcommand{\el}{\ensuremath{\ell}}
\DeclareFontFamily{U}{mathx}{\hyphenchar\font45}
\DeclareFontShape{U}{mathx}{m}{n}{
      <5> <6> <7> <8> <9> <10>
      <10.95> <12> <14.4> <17.28> <20.74> <24.88>
      mathx10
      }{}
\DeclareSymbolFont{mathx}{U}{mathx}{m}{n}
\DeclareMathAccent{\widecheck}{0}{mathx}{"71}
\newcommand{\tw}{\textwidth}
\newcommand{\lr}{\longleftrightarrow}
\renewcommand{\kill}[1]{}
\newcommand{\dummy}[1]{\mbox{}}
\newcommand{\xequal}[2][]{\ext@arrow 0055{\equalfill@}{#1}{#2}}
\def\equalfill@{\arrowfill@\Relbar\Relbar\Relbar}
\newcommand{\mto}{\mapsto}
\newcommand{\ku}{\ensuremath{\emptyset}}
\renewcommand{\k}{\ensuremath{\ol{\mathrm{P}}}}
\newcommand{\n}{\ensuremath{\bm{n}}}
\newcommand{\hou}[3]{{#1}\equiv {#2}\pmod{#3}}
\newcommand{\vf}{\vfill}
\newcommand{\kyo}[1]{{\eh{#1}}}
\newcommand{\ok}{}
\newcommand{\m}{\ensuremath{\infty}}
\renewcommand{\k}[1]{\ensuremath{\left({#1}\right)}}
\newcommand{\ds}{\dots}
\newcommand{\bca}{\begin{cases}}
\newcommand{\eca}{\end{cases}}
\newcommand{\A}{\mathcal{A}}
\newcommand{\s}[1]{\ensuremath{\di\int{#1}\,dx}}
\renewcommand{\ss}[3]{\ensuremath{\di\int_{#1}^{#2}{#3}\,dx}}
\newcommand{\bpic}{\begin{picture}}\newcommand{\epic}{\end{picture}}
\newcommand{\beda}{\begin{edaenumerate}}
\newcommand{\eeda}{\end{edaenumerate}}
\newcommand{\g}{\ensuremath{\mathbf{g}}}
\newcommand{\cd}{\cdots}
\newcommand{\xto}{\xrightarrow}
\newcommand{\sh}[1]{\shadowbox{#1}}
\newcommand{\st}{\strut}
\newcommand{\shape}{\text{shape\,}}
\newcommand{\q}{\quad}
\newcommand{\up}{\uparrow}
\newcommand{\too}{\longrightarrow}
\newcommand{\from}{\longleftarrow}
\renewcommand{\from}{\,\reflectbox{$\to$}\,}
\newcommand{\bq}{\begin{quote}}\newcommand{\eq}{\end{quote}}
\newcommand{\ti}{\times}
\newcommand{\ra}{\rangle}\newcommand{\la}{\langle}
\newcommand{\be}{\begin{enumerate}}\newcommand{\ee}{\end{enumerate}}
\newcommand{\bce}{\begin{center}}\newcommand{\ece}{\end{center}}
\newcommand{\bde}{\begin{description}}\newcommand{\ede}{\end{description}}
\newcommand{\bri}{\begin{flushright}}\newcommand{\eri}{\end{flushright}}
\newcommand{\bb}{\begin{block}}\newcommand{\eb}{\end{block}}
\newcommand{\bt}{\begin{thm}}\newcommand{\et}{\end{thm}}
\newcommand{\bpf}{\begin{proof}}\newcommand{\epf}{\end{proof}}
\newcommand{\bex}{\begin{ex}}\newcommand{\eex}{\end{ex}}
\newcommand{\bexr}{\begin{exr}}\newcommand{\eexr}{\end{exr}}
\newcommand{\bft}{\begin{fact}}\newcommand{\eft}{\end{fact}}
\newcommand{\brk}{\begin{rmk}}\newcommand{\erk}{\end{rmk}}
\newcommand{\ba}{\begin{align*}}\newcommand{\ea}{\end{align*}}
\newcommand{\bexe}{\begin{exe}}\newcommand{\eexe}{\end{exe}}
\newcommand{\tn}{\textnormal}
\newcommand{\bit}{\begin{itemize}}\newcommand{\eit}{\end{itemize}}
\newcommand{\bcm}{}
\newcommand{\ol}{\overline}\newcommand{\ul}{\underline}
\newcommand{\hf}{\hfill}
\newcommand{\cc}{\ensuremath{\mathbf{C}}}
\newcommand{\nn}{\ensuremath{\mathbf{N}}}
\newcommand{\rr}{\ensuremath{\mathbf{R}}}
\newcommand{\zz}{\ensuremath{\mathbf{Z}}}
\newcommand{\bd}{\begin{defn}}\newcommand{\ed}{\end{defn}}
\newcommand{\bp}{\begin{prop}}\newcommand{\ep}{\end{prop}}
\newcommand{\eh}{\emph}\newcommand{\al}{\alpha}
\newcommand{\sub}{\subseteq}
\newcommand{\lam}{\lambda}
\renewcommand{\i}{\item}\newcommand{\fb}{\fbox}
\newcommand{\mb}{\mbox}
\newcommand{\te}{\text}\newcommand{\ph}{\phantom}
\newcommand{\wt}{\widetilde}\newcommand{\sm}{\setminus}
\renewcommand{\l}{\left}\renewcommand{\r}{\right}
\newcommand{\then}{\Longrightarrow}
\newcommand{\di}{\displaystyle}\renewcommand{\a}{\ensuremath{\bm{a}}}
\renewcommand{\b}{\ensuremath{\bm{b}}}
\newcommand{\e}{\ensuremath{\bm{e}}}
\newcommand{\x}{\ensuremath{\bm{x}}}
\newcommand{\y}{\ensuremath{\bm{y}}}
\newcommand{\np}{\newpage}
\renewcommand{\b}{\beta}
\renewcommand{\a}{\alpha}
\renewcommand{\up}{\uparrow}
\renewcommand{\x}{\mathbf{x}}
\renewcommand{\y}{\mathbf{y}}
\renewcommand{\s}{\sigma}
\tikzset{
    cell/.style={
        anchor=south west,
        draw,
        minimum size=1cm,
    },
}
\theoremstyle{definition}
\newtheorem{thm}{Theorem}[section]
\newtheorem{lem}[thm]{Lemma}
\newtheorem{prop}[thm]{Proposition}\newtheorem{cor}[thm]{Corollary}
\newtheorem{exr}[thm]{Exercise}
\newtheorem{ob}[thm]{Observation}
\newtheorem{ex}[thm]{Example}
\newtheorem{defn}[thm]{Definition}\newtheorem{rmk}[thm]{Remark}
\newtheorem{fact}[thm]{Fact}
\newtheorem{block}[thm]{}
\newtheorem*{exe}{Exercise}
\renewcommand{\a}{\alpha}
\renewcommand{\l}{\lam}
\renewcommand{\g}{\gamma}
\newcommand{\vp}{\zeta_{1}}
\newcommand{\innn}{\in\nn}
\renewcommand{\g}{\Gamma}
\newcommand{\rect}{\text{Rect}\,}
\newcommand{\rects}{\text{Rect}\,S}
\renewcommand{\R}{\mathbf{R}}
\newcommand{\ot}{\otimes}
\newcommand{\size}{\text{size\,}}
\newcommand{\inside}{\text{inside\,}}
\newcommand{\out}{\text{outside\,}}
\renewcommand{\l}{\lambda}
\renewcommand{\wt}{\text{wt}}
\renewcommand{\ep}{\varepsilon}
\newcommand{\yd}{\ydiagram}
\renewcommand{\r}{{\bf r}}
\renewcommand{\vp}{\varphi}
\newcommand{\ttt}[3]{\ensuremath{
\begin{ytableau}{#1}&{#2}\\{#3}\end{ytableau}}
}
\renewcommand{\g}{\gamma}
\renewcommand{\m}{\mu}
\newcommand{\kt}{\te{KT}}
\newcommand{\olk}{\ol{k}}
\renewcommand{\A}{\mathbf{A}}
\newcommand{\B}{\mathbf{B}}
\newcommand{\ssot}{\te{SSOT}}
\renewcommand{\st}{\text{st}}
\renewcommand{\ot}{\text{OT}}
\newcommand{\OT}{\text{OT}}
\renewcommand{\rr}{\equ_{C}}
\renewcommand{\a}{\bm{a}}
\newcommand{\pmk}{[\olk]}
\newcommand{\C}{\mathbf{C}}
\renewcommand{\sh}{\te{sh\,}}
\newcommand{\sha}{\ensuremath{\te{sh\,}_{A}}}
\newcommand{\shc}{\ensuremath{\te{sh\,}_{C}}}
\renewcommand{\ss}{\text{ss}}
\renewcommand{\a}{\al}
\renewcommand{\B}{\mathbf{B}}
\renewcommand{\a}{\alpha}
\newcommand{\wts}{\ensuremath{\text{wt}^{*}\,}}
\renewcommand{\b}{\beta}
\newcommand{\li}{\la i\ra}
\newcommand{\kot}{\te{KOT}}
\renewcommand{\sh}{\te{sh\,}}
\newcommand{\Sh}{\te{Sh\,}}
\newcommand{\olai}{\ol{i}}
\newcommand{\olaii}{\ol{i+1}}
\newcommand{\fbi}{\fb{$i$}}
\newcommand{\fbib}{\fb{$\ol{i}$}}
\newcommand{\fbibar}{\fb{$\ol{i}$}}
\newcommand{\fbip}{\fb{${i+1}$}}
\newcommand{\fba}{\fb{$A$}}
\newcommand{\fbb}{\fb{$B$}}
\renewcommand{\L}{\mathbf{L}}
\renewcommand{\sh}{\te{sh}\,}
\newcommand{\oob}{1\ol{1}}
\newcommand{\ttb}{2\ol{2}}
\newcommand{\hako}{\te{Box\,}}
\newcommand{\fbx}{\fb{$x$}}
\newcommand{\fby}{\fb{$y$}}
\newcommand{\fbz}{\fb{$z$}}
\renewcommand{\m}{\mu}
\newcommand{\tsp}{\te{sp}}
\renewcommand{\l}{\lambda}
\newcommand{\red}{\text{red}}
\newcommand{\clmn}{c_{\l\m}^{\n}}
\newcommand{\rh}{\rho}
\begin{document}

\title{RSK correspondence for King tableaux with Berele insertion}


\author{Masato Kobayashi}
\thanks{Corresponding author: 
Masato Kobayashi}

\author{Tomoo Matsumura}
\thanks{Tomoo Matsumura is supported partially by JSPS Grant-in-Aid for
Scientific Research (C) 20K03571 and (B) 23K25772.
}

\date{\today}                                       


\subjclass[2020]{Primary:05E05;\,Secondary:05E10, 05E18}
\keywords{Berele insertion, Bender-Knuth involution, 
Cauchy identity,
King tableaux, Knuth equivalence, 
RSK correspondence, 
Semistandard oscillating tableaux}

\address{Masato Kobayashi\\
Department of Engineering\\
Kanagawa University, Rokkaku-bashi, Yokohama, Japan.}
\email{masato210@gmail.com}

\address{Tomoo Matsumura\\
Department of Natural Sciences\\
International Christian University, 
Tokyo, Japan.
}
\email{matsumura.tomoo@icu.ac.jp}

\maketitle
\begin{abstract}
We establish a bijective RSK correspondence of type C for King tableaux with Berele insertion as a reformulation of Sundaram's correspondence (1986). 
For its $Q$-symbol, we make use of semistandard oscillating tableaux (SSOT), a new object which 
S.J.Lee (2025) introduced. Further, we show hidden duality of Cauchy identity through RSK correspondences of type A and C. Finally, we prove that the generating function of SSOT is symmetric by constructing a new sort of Bender-Knuth involution.
\end{abstract}
\tableofcontents


\ytableausetup{centertableaux}
\section{Introduction}

\subsection{Combinatorics of Young tableaux}

\eh{Young tableaux} play a significant role in algebraic combinatorics. 
Here are several classic topics.
\begin{itemize}
	\item row/column-insertion
	\item Knuth equivalence
	\item skew tableaux, jeu de taquin 
	\item Robinson-Schensted (RS) correspondence
	\item Robinson-Schensted-Knuth (RSK) correspondence
	\item Schur function
	\item Bender-Knuth involution
	\item Cauchy identity
	\item Littlewood-Richardson Rule 
	\item Kashiwara's crystal
\end{itemize}
Fulton discussed most of these topics in his book \cite{fu}. Indeed, there are many applications of these ideas 
to other areas, such as 
algebra, representation theory and geometry.

\subsection{King (1976), Berele (1986), Sundaram (1986), S.J.Lee (2025)}
There are many other kinds of tableaux.
One is \eh{King tableaux} ({symplectic tableaux}) as King introduced \cite{ki}. Afterward Berele \cite{be} introduced a row-insertion algorithm (now known as \kyo{Berele insertion}) and found a bijective RS  correspondence for such tableaux. This is a natural extension of the 
formalism in type A. 
Further, Sundaram \cite{sun1, sun2} discussed 
certain correspondence among King tableaux, Burge arrays and Knuth equivalence. 
More recently, S.J.Lee \cite{lee} introduced 
\eh{semistandard oscillating tableaux} (SSOT), a newcomer to this area, as 
a sequence of partitions with several boxes added or deleted  at each step (and certain numbering to those). 
He further proved that 
there is a crystal-weight-preserving bijection 
between certain SSOTs and King tableaux. 
Motivated by these authors, we aim in this article 
to develop combinatorics of King tableaux.

\subsection{Main results}
Here are our main results.
\begin{itemize}
	\item 
Characterizations of type C Knuth equivalence 
(Theorem \ref{t4}, Corollary \ref{ct1}) 
	\item Type C RSK correspondence 
	(Theorem \ref{t5})
	\item Duality of Cauchy identity (Corollary \ref{c1})
	\item Bender-Knuth involution for SSOTs (Lemma \ref{lab})
	\item Symmetry of SSOT function (Theorem \ref{t6})
\end{itemize}
They not only unfold combinatorics in type C theory individually but reveal 
its rich interaction with type A theory.
\begin{rmk}
After writing this manuscript, we found that 
Roby-Terada (2005) \cite{rt} discussed standardization of a word, a King tableau and Berele insertion, objects on $P$-side. Our key idea in this article is 
its dual, semistandardization of objects on $Q$-side. 
\end{rmk}

\begin{rmk}
In the type C tableaux theory, we often use letters 
$1, \ol{1}, \ds, k, \ol{k}$ for entries of tableaux with certain total order. Other than King tableaux, there is actually another style, \kyo{Kashiwara-Nakashima} (KN) tableaux as Kashiwara-Nakashima (1994) \cite{kn}, 
Lecouvey (2002) \cite{le}, Sheats (1999) \cite{sh} discussed, for example.
The total order for King tableaux is 
\[
1<\oli<2<\olii<\cd <k<\ol{k}
\]
while 
the one for KN is 
\[
1<2<\cd <k<\ol{k}<\cd <\olii<\oli.
\]
We also remark that 
there are many ways to read 
row and column words from a tableau.
In this article, we follow Fulton's style \cite{fu} 
as we make sure later on.
\end{rmk}


\subsection{Organization of the article}

In Section 2, we briefly review 
definitions and results in type A theory on SSYTs.
In Section 3, we confirm key ideas and facts on 
King tableaux and Berele insertion. 
In Section 4, we give a proof of 
most of our main results.

\subsection*{Acknowledgment}
The authors would like to 
thank the anonymous referee for helpful comments to improve the manuscript.


\section{type A theory}
Throughout $k\innn$ and let 
$[k]=\{1, 2, \ds, k\}$. 
A \emph{partition} (Young diagram) $\l$ is a finite weakly increasing  sequence of positive integers. 
The symbol $\el(\l)$ means the number of its rows. 
A \kyo{Young tableau} $T$ is a filling of each box on a Young diagram $\l$ (English style) 
by an element of $[k]$. 
We write $T(m, n)$ for the entry of $T$ at row $m$ and column $n$. To indicate its shape, we write as $\sh T=\l$. 

\subsection{Young tableaux}


\begin{defn}
A tableau $T$ is \emph{semistandard} if it satisfies the following.
\begin{enumerate}
\item Entries are weakly increasing along the rows;
\item Entries are strictly increasing down the columns. (column-strict condition)
\end{enumerate}
Denote by $\ssyt_{k}(\l)$ the set of 
semistandard Young tableaux 
of entries in $[k]$ of shape $\l$. 
For convenience, set 
\[
\ssyt_{k}=\bigcup_{\l}\ssyt_{k}(\l),
\ssyt(\l)=\bigcup_{k}\ssyt_{k}(\l),
\ssyt =\bigcup_{k}\ssyt_{k}.
\]
A semistandard tableau $T$ of shape $\l$ is \kyo{standard} if 
entries are strictly increasing along the rows 
and each of $1, \ds, |\l|$ appears exactly once.
For the set of such tableaux, use symbol $\te{ST}(\l)$.
\end{defn}

\subsection{row insertion}

We review type A \emph{Schensted row-insertion}.
Let $T$ be a tableau and $x$ a letter.

\begin{enumerate}
\item Insert $x=x_{1}$ to the first row 
of $T$ either displacing (bump) the smallest number which is larger than $x_{1}$ (say this number is $x_{2}$). 
If no number is larger than $x$, just add it to the end of the first row (and then stop).
\item Insert $x_{2}$ to the second row with the same procedure.
\item Continue this until we stop.
\end{enumerate}
Denote the resulting tableau by $T\from x$.

\begin{quote}
For example, $T=
\begin{ytableau}
 1 &1 &3&4 \\
 2&5\\
 4\\
\end{ytableau}$,
$T\from 2=
\begin{ytableau}
 1 &1 &2&4 \\
 2&3\\
 4&5\\
\end{ytableau}.$
\end{quote}

Now let $[k]^{*}$ denote the set of 
all finite words of $[k]$. 
Its subset $[k]^{n}$ consists of 
all such words of length $n$.
\begin{defn}
For an SSYT $T$, define $\row T$ by reading \emph{row  word} from bottom to top, left to right.
\end{defn}

\begin{fact}[{\cite[p.17]{fu}}]
$\row:\ssyt_{k}\to [k]^{*}, T\mto \row T$ is injective.
\label{rin}
\end{fact}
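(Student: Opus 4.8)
The plan is to prove injectivity by exhibiting an explicit decoding procedure that recovers $T$ from the word $\row T$. Write the rows of $T$ from top to bottom as $R_1,\dots,R_\ell$ with $\ell=\el(\l)$, so that by definition $\row T$ is the concatenation $R_\ell R_{\ell-1}\cdots R_1$, each $R_i$ read left to right. First I would record the elementary observation that, since $T$ is semistandard, each individual row $R_i$ is weakly increasing, and hence reading within a single row never produces a strict descent.

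The heart of the argument, and the step I expect to carry the real content, is to show that a strict descent occurs in $\row T$ at \emph{every} junction between two consecutive rows $R_{i+1}$ and $R_i$ (for $1\le i<\ell$). The relevant comparison is between the last letter of $R_{i+1}$, namely $T(i+1,c_{i+1})$ where $c_{i+1}\ge 1$ is the length of row $i+1$, and the first letter of $R_i$, namely $T(i,1)$. Weak increase along row $i+1$ gives $T(i+1,c_{i+1})\ge T(i+1,1)$, while the column-strict condition applied to the first column gives $T(i+1,1)>T(i,1)$; chaining these yields $T(i+1,c_{i+1})>T(i,1)$, a strict descent. It is exactly here that column-strictness and the bottom-to-top reading convention conspire to make the inequality point the right way, so this is the one place where semistandardness is genuinely used.

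Putting the two facts together, the maximal weakly increasing runs of $\row T$ are precisely the rows $R_\ell,\dots,R_1$: no strict descent occurs inside a run, and one occurs at every row boundary. Hence the decoding procedure is simply to cut $\row T$ at each strict descent; the resulting blocks, stacked from bottom to top, reconstruct the rows of $T$, and therefore its shape $\l$ and its entire filling. To finish, I would argue that if $\row T=\row T'$ for $T,T'\in\ssyt_k$, then the two equal words admit the same cut decomposition, so $T$ and $T'$ share all their rows and thus coincide. Beyond the descent inequality above I do not anticipate any obstacle; the remaining steps are routine bookkeeping about run decompositions.
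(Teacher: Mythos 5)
Your proposal is correct and coincides with the standard argument behind the paper's citation to Fulton: since entries weakly increase along rows and the column-strict condition forces the junction inequality $T(i+1,c_{i+1})\ge T(i+1,1)>T(i,1)$, the maximal weakly increasing runs of $\row T$ are exactly the rows read from bottom to top, so cutting the word at its strict descents recovers $T$. No gaps; this is essentially the same proof the paper points to.
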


For partitions $\l, \mu$ such that $\l\sub \m$ as sets of boxes, 
define a \kyo{skew shape} $\m\sm\l$ to be the set of boxes in $\m$ but not in $\l$.
A \kyo{skew tableau} is a filling of a skew shape. 
It makes sense to talk about (semi)standardness and its row word for such tableaux.
A \emph{punctured tableau} is 
a filling of a skew shape with the exactly one box empty. 

A \emph{slide} on a punctured tableau is 
a local operation to move the empty box to its below or right shown under the following rule.
\begin{center}
\begin{minipage}[c]{.8\tw}
\xymatrix@=5mm{
*{}&
*+{\begin{ytableau}
	\none&a&v\\
	b&\lg{}$\mb{\ph{a}}$&y\\
	u&x\\
\end{ytableau}}
\ar@{->}_{x\le y}[ld]
\ar@{->}_{x> y}[rd]
&*{}\\
*+{\begin{ytableau}
	\none&a&v\\
	b&x&y\\
	u&\lg{}\mb{\ph{a}}\\
\end{ytableau}}
&*{}&
*+{\begin{ytableau}
	\none&a&v\\
	b&y&\lg{}\mb{\ph{a}}\\
	u&x\\
\end{ytableau}}
}
\end{minipage}
\end{center}
We can repeat such operations 
as follows.
\begin{itemize}
\item If the empty box is at a corner, we just stop.
\item Otherwise, 
there is a box below or right to the empty box.	
If only one of them exists, then slide it in this direction.
If both exist, move the empty box to the one including a smaller letter (If those letters are equal, \kyo{below} is chosen).
\end{itemize}

Continuing this, the empty box eventually arrives at a corner.
The whole process of a series of slidings on a skew tableau is \emph{jeu de taquin}. 


\subsection{RS correspondence}

Given $w=w_{1}\cd w_{n}\in [k]^{*}$, 
we construct $(P_{j}(w), Q_{j}(w))_{j=0}^{n}$ 
inductively as follows.
Set $P_{0}(w)=\ku, Q_{0}(w)=\ku$. 
For $j\ge 1$, 
let $P_{j}(w)=P_{j-1}(w)\from w_{j}$. 
To construct $Q_{j}(w)$, 
add one box $\fbox{$j$}$ to $Q_{j-1}(w)$ at the  position which the new box of $P_{j}(w)$ appeared. 
Then set $P(w)=P_{n}(w), Q(w)=Q_{n}(w)$.
Call $P(w)$ its \emph{inserting tableau} and $Q(w)$ \emph{recording tableau}.
\begin{thm}[RS correspondence of type A]\label{t1}
\[
\Phi:
[k]^{*}_{n}\to 
\bigcup_{|\l|=n}
\ssyt_{k}(\l)\ti \text{ST}(\l),
\q 
w\mto (P(w), Q(w))
\]
is a bijection.
\end{thm}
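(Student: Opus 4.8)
The plan is to prove bijectivity by exhibiting an explicit two-sided inverse built from \emph{reverse row insertion}, and I would begin by confirming that $\Phi$ is well defined, i.e.\ that $P(w)$ is genuinely semistandard and $Q(w)$ genuinely standard of the same shape. Semistandardness of $P(w)$ reduces, by induction on $n$, to the single claim that inserting one letter $x$ into an SSYT again produces an SSYT: the rows stay weakly increasing because the bumped entry is the \emph{smallest} entry strictly larger than $x$, so if $a_{i}$ is bumped then $a_{i-1}\le x<a_{i}$ and the row $\ds,a_{i-1},x,a_{i+1},\ds$ is still weakly increasing; the columns stay strictly increasing by the standard row-bumping lemma (a short analysis of how the bumping route moves weakly left as it descends). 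Since the shape grows by exactly one outer corner at each step, $P_{j}(w)$ and $Q_{j}(w)$ always carry a common partition shape, and because the box labelled $j$ is inserted at step $j$ with labels increasing in $j$, the entries of $Q(w)$ strictly increase along rows and down columns with each of $1,\ds,n$ occurring once; thus $Q(w)\in\te{ST}(\l)$.

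The heart of the argument is the invertibility of a single insertion step, and this is where I expect the main obstacle, since it is exactly where the strict/weak inequalities of semistandardness must be tracked with care. Fix a pair $(P,Q)$ with $\sh P=\sh Q=\l$ and $|\l|=n$. The box of $Q$ carrying the maximal label $n$ sits at an outer corner $(r,c)$ of $\l$, and I would define a \emph{reverse bump} ejecting $v=P(r,c)$ upward by the rule: to reverse-insert a letter $v$ into the row above, replace the \emph{rightmost} entry of that row that is strictly smaller than $v$ by $v$, and push the displaced entry further up. Column-strictness of $P$ guarantees such an entry always exists (the entry directly above in the same column is already strictly smaller than $v$), so the process never stalls and eventually ejects a letter from the top row. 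One checks that this exactly undoes a forward bump: if forward insertion replaced $a_{i}=v$ by $x$, then in the resulting row $x<v\le a_{i+1}$, so $x$ is precisely the rightmost entry strictly smaller than $v$; the reverse bump therefore restores $a_{i}=v$ and returns the original bumping letter $x$. The letter ejected from the first row is $w_{n}$.

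Having inverted a single step, I would peel off the boxes of $Q$ in the forced order $n,n-1,\ds,1$, each time reverse-inserting the current ejected entry to recover $w_{n},w_{n-1},\ds,w_{1}$ and to restore the previous pair $(P_{j-1},Q_{j-1})$. This defines a map $\Psi$ on $\bigcup_{|\l|=n}\ssyt_{k}(\l)\ti\te{ST}(\l)$, and the well-definedness established in the first paragraph guarantees $\Psi$ lands in $[k]^{*}_{n}$. Because each reverse step is literally the inverse of the matching forward step, and because the choice of which corner to remove at each stage is dictated unambiguously by the labels of $Q$, the compositions $\Psi\circ\Phi$ and $\Phi\circ\Psi$ are both the identity. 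Hence $\Phi$ is a bijection.
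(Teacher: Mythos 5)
Your proof is correct: the reverse row-insertion (inverse bumping) construction, with the removal order dictated by the labels of $Q$ and non-stalling guaranteed by column-strictness, is the standard proof of the type A Robinson--Schensted correspondence. The paper itself states Theorem \ref{t1} without proof, as part of its review of classical type A theory (deferring to Fulton's book \cite{fu}), and your argument is essentially the canonical one given there.
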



\subsection{Knuth equivalence}
Knuth \cite{knu} introduced certain transformations for  a word. 
Let $x, y, z$ be letters and $w, w'$ words. 
Consider the transformations each of which moves only three adjacent letters $x, y, z$ in a word as follows.
For $x<y\le z$:
\begin{align*}
	(K_{1})\ph{a}& yxz\mto yzx&& (K_{2})\ph{a} yzx\mto yxz
\end{align*}
For $x<y\le z$: 
\begin{align*}
	(K_{3})\ph{a}& xzy\mto zxy&& 
	(K_{4})\ph{a} zxy\mto xzy 
\end{align*}
Say $w, w'$ are \emph{elementary Knuth equivalent} if 
we obtain $w'$ from $w$ by one of $K_{1}, K_{2}, K_{3}, K_{4}$.
Say $w, w'$ are \emph{Knuth equivalent} ($w\equiv w'$ or 
$w\equiv_{A} w'$ for emphasis) 
if there is a sequence of words 
$w_{0}=w, w_{1}, \ds, w_{n}=w'$ such that 
each $w_{j}$ is elementary Knuth equivalent 
to $w_{j+1}$.




\begin{defn}
Define $\colt$ by reading \emph{column word} of $T$, 
from bottom to top, left to right.
\end{defn}
Note that 
$\col:\ssyt_{k}\to [k]^{*}$ is also injective. 
Below, we collect key results for reader's convenience.
For details, see \cite[Chapters 1, 2]{fu}.

\begin{fact}
For any SSYT $T$, $\rowt\equiv_{A}\col T$.
\end{fact}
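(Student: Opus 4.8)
The plan is to route both reading words through Schensted row-insertion, exploiting the fundamental compatibility between insertion and the Knuth relations. The crux is the following key lemma: for any semistandard $T$ and any letter $x$,
\[
(\row T)\,x \equiv_{A} \row(T\from x).
\]
Granting this, a short induction on word length shows that every word $w=w_{1}\cdots w_{n}$ satisfies $w\equiv_{A}\row(P(w))$: writing $w=w'w_{n}$ and using the inductive hypothesis together with the key lemma,
\[
\row(P(w))=\row\bigl(P(w')\from w_{n}\bigr)\equiv_{A}\bigl(\row P(w')\bigr)w_{n}\equiv_{A}w'w_{n}=w.
\]

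I would then apply this to $w=\col T$, so that $\col T\equiv_{A}\row(P(\col T))$, and it remains to identify $P(\col T)$. I claim $P(\col T)=T$. Reading $T$ by columns from left to right, each column bottom to top, and inserting: a single strictly increasing column read bottom to top re-inserts to itself through a chain of bumps, and the column-strict and row-weak conditions guarantee that inserting the $(j{+}1)$-st column onto the tableau already assembled from columns $1,\dots,j$ merely appends it as a new column in the correct cells. (This is precisely the verification underlying the injectivity of $\col$ noted above.) Hence $\col T\equiv_{A}\row(P(\col T))=\row T$, which is the claim.

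The main obstacle is the key lemma $(\row T)x\equiv_{A}\row(T\from x)$, whose proof is a careful local analysis of one bump at the interface of two consecutive rows. When $x$ enters the top row and displaces $x'$ into the next row, one uses $K_{1}$--$K_{4}$ to slide the appended letter leftward into its landing cell while ejecting $x'$ to the position in the word that corresponds to the lower row, then recurses on the continuing bump. Tracking which of the strict/weak inequalities holds at each three-letter step --- and hence whether $K_{1}/K_{2}$ or $K_{3}/K_{4}$ applies --- is exactly where the effort concentrates; the rest is bookkeeping. An alternative route lays out the columns of $T$ along a staircase as a skew tableau whose reading word is $\col T$ and appeals to the fact that jeu de taquin preserves the Knuth class while rectifying it to $T$; since this merely repackages the same two-row analysis inside slides, I would favor the insertion argument.
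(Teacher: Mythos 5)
Your proposal is correct, but note that the paper never proves this statement: it is recorded as a Fact and deferred to Fulton, Chapters 1--2, so the comparison is with the standard treatment rather than with any internal argument. Your route is exactly that standard one. The key lemma $(\row T)x\equiv_{A}\row(T\from x)$ is precisely the theorem the paper states immediately after this Fact (likewise without proof, citing Fulton); invoking it involves no circularity, since its bumping-route proof never uses the equivalence of row and column words. The other ingredient, $P(\col T)=T$, is also correct as you argue: because column lengths of a Young diagram weakly decrease from left to right, when the $(j{+}1)$-st column $a_{r}>\cdots>a_{1}$ of $T$ (read bottom to top) is row-inserted into the tableau formed by columns $1,\dots,j$, the letter $a_{r}$ appends at the end of row $1$, and each subsequent letter enters row $1$ and pushes the chain $a_{r}>a_{r-1}>\cdots$ down one row, so the whole column is reproduced intact in column $j{+}1$; induction on $j$ then gives $P(\col T)=T$, and combining with $\col T\equiv_{A}\row\bigl(P(\col T)\bigr)$ yields the Fact. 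Your alternative via jeu de taquin (reading $\col T$ off a staircase-shaped skew tableau that rectifies to $T$) is equally valid and is closer in spirit to how this paper later manipulates skew tableaux and Berele deletions, but, as you say, it repackages the same two-row analysis, so the insertion argument is the leaner choice.
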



\begin{thm}\label{th}
For an SSYT $T$ and $x$,
\[
\row(T\from_{A} x)\ka(\row T)x.
\]\end{thm}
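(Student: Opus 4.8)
The plan is to induct on the number $m$ of rows of $T$, peeling off the top row so that the multi-row claim collapses onto a single commutation identity for one row. List the rows of $T$ from top to bottom as $R_{1},\ds,R_{m}$, set $u=R_{1}$, and let $T'$ be the tableau with rows $R_{2},\ds,R_{m}$. Since the row word reads rows from bottom to top, $\row T=(\row T')\,u$. The base case $m=0$ is trivial: $\row(\ku\from_{A}x)=x=(\row\ku)\,x$. For the inductive step I run the first step of the insertion into the top row $u$. If $x$ is appended to $u$ without bumping, then $T\from_{A}x$ has top row $ux$ and unchanged lower part $T'$, so $\row(T\from_{A}x)=(\row T')\,ux=(\row T)\,x$ and we are done with equality. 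Otherwise $x$ bumps $b:=u_{j}$, where $j$ is least with $u_{j}>x$, the new top row is $u'=u_{1}\cd u_{j-1}\,x\,u_{j+1}\cd u_{r}$, and $b$ is inserted into $T'$. Hence $\row(T\from_{A}x)=\row(T'\from_{A}b)\,u'$, and the inductive hypothesis gives $\row(T'\from_{A}b)\ka(\row T')\,b$. As elementary Knuth moves act on three consecutive letters, $\ka$ is a congruence for concatenation, so it suffices to prove the single-row identity
\[
u_{j}\,u_{1}\cd u_{j-1}\,x\,u_{j+1}\cd u_{r}\ka u_{1}\cd u_{r}\,x.
\]

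To establish this identity I would move one letter at a time. Write $A=u_{1}\cd u_{j-1}$, whose entries are $\le x$, and $B=u_{j+1}\cd u_{r}$, whose entries are $\ge u_{j}>x$; the right-hand side is $A\,u_{j}\,B\,x$. First I slide $x$ leftward past all of $B$ until it lands just after $u_{j}$: each step rewrites a triple $p\,q\,x$ with $x<p\le q$ by $(K_{2})$, which applies because every entry of $B$, as well as $u_{j}$, strictly exceeds $x$. This turns $A\,u_{j}\,B\,x$ into $A\,u_{j}\,x\,B$. Next I slide $u_{j}$ leftward past all of $A$ to the front: each step rewrites a triple $a\,u_{j}\,c$ with $a\le c<u_{j}$ (where $c$ is $x$ or an entry of $A$) by a second-kind move of the form $(K_{3})$, producing $u_{j}\,A\,x\,B$, which is the left-hand side. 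Chaining the two phases yields the identity and closes the induction.

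The crux is this single-row identity, and within it the careful matching of the inequality hypotheses to each Knuth move. The delicate points are the boundary cases---when an entry of $A$ equals $x$, or when $u_{j}$ equals the first entry of $B$---where one must select the precise member of $(K_{1})$--$(K_{4})$ whose strict/weak inequalities are satisfied; the weakly increasing order of the row guarantees that exactly one of them always applies. Once the identity is secured, the reduction through concatenation and the two-phase sliding are routine, so no genuine difficulty remains beyond this bookkeeping.
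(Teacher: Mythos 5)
Your proof is correct, and it is essentially the standard argument: the paper states this theorem without proof (deferring to Fulton, Chapters 1--2), and the proof there is exactly your reduction---induct on the number of rows, peel off the top row so that $\row(T\from_{A}x)=\row(T'\from_{A}b)\,u'$, and verify the single-row bumping identity $u_{j}u_{1}\cdots u_{j-1}\,x\,u_{j+1}\cdots u_{r}\equiv_{A}u_{1}\cdots u_{r}\,x$ by sliding $x$ left through $B$ and then $u_{j}$ left through $A$ with elementary Knuth moves, using that $\equiv_{A}$ is a congruence for concatenation.

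One remark on the boundary cases you flagged: your Phase 2 condition $a\le c<u_{j}$ is the standard hypothesis for the second Knuth relation $xzy\equiv zxy$ (namely $x\le y<z$), and the weak inequality $a\le c$ is exactly what is needed when an entry of $A$ equals $x$; the paper's displayed $(K_{3})/(K_{4})$ instead carry the condition $x<y\le z$, but that is a typo in the paper rather than a problem with your proof---under that literal reading the word $121$ admits no elementary move at all, so already $\row(T\from_{A}1)\equiv_{A}(\row T)\,1$ would fail for the one-row tableau $T$ with entries $1,2$, contradicting both this theorem and the paper's later equivalence $w\equiv_{A}w'\iff P(w)=P(w')$.
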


\begin{thm}
For a word $w$, there exists a unique SSYT $T$ such that 
\[
w\equiv_{A} \rowt.
\qte{(Indeed, $T=P(w)$)}
\]
\end{thm}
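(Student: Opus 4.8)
The plan is to prove the two halves separately: \emph{existence} (the candidate $T=P(w)$ works) and \emph{uniqueness} (any admissible $T$ must coincide with $P(w)$, which also confirms the parenthetical claim). For existence I would induct on the length of $w$, using Theorem~\ref{th} as the engine. Write $w=ux$ with $x$ a single letter; by the construction of the inserting tableau, $P(w)=P(u)\from x$. The inductive hypothesis gives $u\ka\row P(u)$. Since an elementary Knuth move alters only three consecutive letters, appending the fixed letter $x$ on the right keeps every move legal and local, so $\ka$ is a right congruence and $ux\ka(\row P(u))x$. Theorem~\ref{th} then yields $(\row P(u))x\ka\row(P(u)\from x)=\row P(w)$, and transitivity of $\ka$ gives $w\ka\row P(w)$. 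The base case $w=\emptyset$ is trivial. This establishes existence with $T=P(w)$.

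For uniqueness I would reduce everything to two facts about insertion: (i) the \emph{reconstruction} identity $P(\row T)=T$ for every SSYT $T$, and (ii) the \emph{invariance} statement that $w\ka w'$ implies $P(w)=P(w')$. Granting these, if $T$ is any SSYT with $w\ka\row T$, then $\row T\ka w$, so (ii) gives $P(\row T)=P(w)$, while (i) gives $P(\row T)=T$; hence $T=P(w)$ is forced, which is exactly uniqueness. The reconstruction identity (i) I would prove by induction on the number of rows: removing the top row $R_1$ leaves an SSYT $U$ with $\row T=(\row U)R_1$, and after $P(\row U)=U$ one checks that inserting the weakly increasing word $R_1$ rebuilds precisely the missing top row, returning $T$ (each letter of $R_1$ either displaces an entry into the position dictated by the column-strict condition or is appended in its correct cell).

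The main obstacle is fact (ii): that the insertion tableau is constant on Knuth classes. This does not follow formally from the quoted results and is the technical heart of the argument. It must be verified move by move on the generators $K_1,K_2,K_3,K_4$. Because $K_2$ and $K_4$ are the inverses of $K_1$ and $K_3$, and because of the left--right symmetry between the two pairs, it suffices to analyze a single move, say $K_1\colon yxz\mapsto yzx$ with $x<y\le z$. The verification amounts to tracking, letter by letter, how inserting the three letters in the two orders affects the bumping routes in an arbitrary ambient tableau, and checking that the two outcomes agree; the delicate point is using the strict inequalities in $x<y\le z$ to guarantee that the displaced entries land in identical cells in both orders.

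I expect this case analysis of the bumping process to be where essentially all of the work lies. Once it is in hand, the two structural facts (i) and (ii) combine with the easy existence induction to give the theorem at once, with the unique tableau being $T=P(w)$.
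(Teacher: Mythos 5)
Your overall architecture is sound, but you should know that the paper itself never proves this theorem: it is one of the type A facts quoted from \cite[Chapters 1, 2]{fu}, so the only in-paper comparison is with the type C analogue, Theorem \ref{t4}. Your existence induction (iterate Theorem \ref{th}, using that $\ka$ is a congruence under concatenation) is exactly the argument the paper gives there via Lemma \ref{l1}, and your reconstruction fact (i), $P(\row T)=T$ proved by induction on rows with the first row inserted last, is precisely the type A version of the paper's Lemma \ref{l2}. The reduction of uniqueness to (i) together with (ii) (invariance of $P$ under elementary Knuth moves) is also correct, and insisting on proving (ii) by hand is the honest choice: the paper's Theorem \ref{t2} contains (ii), but its standard proof presupposes the present theorem, so simply citing it would risk circularity.

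The genuine gap is your claim that in verifying (ii) ``it suffices to analyze a single move, say $K_{1}$.'' Discarding $K_{2}$ and $K_{4}$ as inverses is fine, but $K_{3}$ cannot be reduced to $K_{1}$ by any left--right symmetry compatible with row insertion. First, the two families are not even mirror images at the level of inequalities: in the correct conventions $K_{1}$ carries $x<y\le z$ while $K_{3}$ carries $x\le y<z$ (the paper's listing of both with $x<y\le z$ is a typo; read literally it would make $121$ and $211$ inequivalent although $P(121)=P(211)$, so the theorem you are proving would itself fail). Second, the one genuine symmetry interchanging the two families --- reversing the word and reversing the order of the alphabet --- does not interact with row insertion in any elementary way: to transport ``$K_{1}$ preserves $P$'' across it you would need to know how $P(w^{*})$ is determined by $P(w)$, which is the evacuation/rotation theorem, a result deeper than the one being proved. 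So you must verify two commutation identities separately, for an arbitrary SSYT $T$: $((T\from y)\from x)\from z=((T\from y)\from z)\from x$ when $x<y\le z$, and $((T\from x)\from z)\from y=((T\from z)\from x)\from y$ when $x\le y<z$. Each is a finite bumping-route analysis, and with both done your proof is complete. (Alternatively, uniqueness can bypass (ii) altogether, essentially as in \cite{fu}: the numbers $L(w,k)$, the maximal total length of $k$ disjoint weakly increasing subsequences, are visibly Knuth-invariant, and together with restriction to the sub-alphabets $\{1,\dots,p\}$ they recover a tableau from its Knuth class.)
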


\begin{thm}\label{t2}
$w{\equiv}_{A}w'\iff P(w)=P(w').$
\end{thm}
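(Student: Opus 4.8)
The plan is to derive both implications as formal consequences of the existence-and-uniqueness theorem stated immediately before this one, which asserts that for every word $w$ there is a \emph{unique} SSYT $T$ with $w\ka \rowt$, and that this $T$ equals $P(w)$. I will use two facts packaged in that statement: the \emph{existence} half, namely $w\ka \row P(w)$ for every word $w$; and the \emph{uniqueness} half, namely that an SSYT is pinned down by the Knuth class of its row word. Together with the fact that $\ka$ is an equivalence relation, these two ingredients suffice, so the body of the argument is pure bookkeeping in $\ka$.

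For the direction $(\Leftarrow)$, I would suppose $P(w)=P(w')$. Applying the existence half to $w$ and to $w'$ and reading off the shared row word gives the chain $w\ka \row P(w)=\row P(w')\ka w'$, whence $w\ka w'$ by symmetry and transitivity. For the direction $(\Rightarrow)$, I would suppose $w\ka w'$ and apply the uniqueness half to the word $w'$: the unique SSYT whose row word is Knuth equivalent to $w'$ is $P(w')$. I then exhibit $P(w)$ as another such tableau. Indeed, the existence half gives $\row P(w)\ka w$, and combining with $w\ka w'$ yields $\row P(w)\ka w'$; thus $P(w)$ is an SSYT whose row word is Knuth equivalent to $w'$, and uniqueness forces $P(w)=P(w')$.

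The point worth stressing is that \emph{within} the proof of Theorem \ref{t2} there is no real obstacle: once the preceding uniqueness theorem is granted, the equivalence is a two-line argument. The genuine content lives in that preceding theorem. Its existence half follows by induction on the length of $w$ from Theorem \ref{th}, using that $\ka$ is a congruence for concatenation (a Knuth move performed inside a word remains a valid move in any longer word): if $\row P(w_{1}\cdots w_{n-1})\ka w_{1}\cdots w_{n-1}$, then appending $w_{n}$ and invoking Theorem \ref{th} gives $\row\bigl(P(w_{1}\cdots w_{n-1})\from w_{n}\bigr)\ka \row P(w_{1}\cdots w_{n-1})\,w_{n}\ka w_{1}\cdots w_{n}$. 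Its uniqueness half is the deeper assertion that Knuth moves preserve the insertion tableau; were it not already available, it would be the hard part, and I would establish it by checking directly that $P$ is invariant under each of the elementary transformations $K_{1},\dots,K_{4}$, or equivalently through the jeu de taquin developed earlier.
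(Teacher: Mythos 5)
Your two-line derivation is correct: granting the theorem stated immediately before (every word is Knuth equivalent to the row word of a unique SSYT, and that SSYT is $P(w)$), both implications follow exactly by the bookkeeping you describe, and there is no circularity since that theorem is given as a prior result. Note, however, that the paper never proves Theorem \ref{t2} at all: it is one of the classical type A facts collected ``for reader's convenience'' with a citation to \cite[Chapters 1, 2]{fu}, so there is no in-paper argument to compare against. What your proof does match, essentially verbatim, is the paper's proof of the type C analogue, Corollary \ref{ct1}: there the backward direction is precisely your chain $w\kc \row P(w)=\row P(w')\kc w'$, obtained from the type C existence-and-uniqueness statement (Theorem \ref{t4}), while the forward direction is handled by checking invariance of $P_{C}$ under each elementary transformation $K_{1},\ldots,K_{5}$ --- the same alternative route you sketch in your final paragraph for the uniqueness half. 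One small remark on that sketch: to deduce uniqueness from ``Knuth moves preserve the insertion tableau'' you also need the fact that $P(\row T)=T$ for every SSYT $T$ (the type A counterpart of the paper's Lemma \ref{l2}); this is harmless here, since you invoke it only as background for the preceding theorem rather than in your main argument, but it is the ingredient that makes the two formulations equivalent.
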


\begin{fact}
Playing jeu de taquin from a skew tableau $S$ with all choices of inner corners leads to the same SSYT.
\end{fact}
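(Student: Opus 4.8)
The plan is to reduce the entire statement to one principle: \emph{every jeu de taquin slide preserves the type A Knuth equivalence class of the reading word}. Granting this, the conclusion is immediate. Let $S$ be a skew tableau, and suppose two different sequences of inner-corner slides rectify $S$ to straight-shape tableaux $T$ and $T'$. Applying the principle once per elementary slide and using transitivity of $\equiv_{A}$, we get $\row T \equiv_{A} \row S \equiv_{A} \row T'$. By Theorem \ref{t2} this yields $P(\row T) = P(\row T')$; since $P(\row U) = U$ for any straight-shape SSYT $U$ (a special case of the uniqueness theorem above, as $U$ is itself the unique SSYT in its own Knuth class), we obtain $T = T'$. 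Equivalently, every rectification equals the canonical tableau $P(\row S)$, which plainly does not depend on the order in which inner corners are chosen.

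Thus everything rests on the key lemma: if $S'$ is obtained from $S$ by a single slide, then $\row S' \equiv_{A} \row S$. First I would reduce to an \emph{elementary} slide that moves the empty box by exactly one cell, since a full slide is a composition of such steps and $\equiv_{A}$ is transitive. For an elementary step there are two cases, dictated by the inequality in the local picture drawn above: a horizontal move, where the entry to the right of the hole shifts left, and a vertical move, where the entry below the hole rises. The horizontal case is free: the displaced entry never leaves its row, so reading bottom-to-top and left-to-right produces \emph{exactly} the same word, and there is nothing to prove. The real content is the vertical case, in which the moved entry $c$ migrates from row $r+1$, which is read earlier, into row $r$, which is read later; in the reading word $c$ must therefore hop rightward past the letters of row $r+1$ lying to its right and then past the letters of row $r$ lying to the left of its landing cell.

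The main obstacle is to show that this hop is accomplished by a sequence of the elementary Knuth moves $K_{1}$--$K_{4}$. Here I would invoke the column-strictness and row-weak-increase of $S$ together with the slide inequality $x \le y$ to pin down the relative order of $c$, the letters it passes on its right in row $r+1$, and the letters it passes on its left in row $r$, and then match each successive transposition of adjacent letters against one of the admissible patterns $x < y \le z$ demanded by $K_{1}$--$K_{4}$. The point that makes this work, and the heart of the matter, is that the inequalities rendering a slide \emph{legal} are precisely those rendering the corresponding triples \emph{admissible} for Knuth transformations; this is the structural reason jeu de taquin and Knuth equivalence are compatible. Once this core comparison is set up, the remaining bookkeeping --- verifying that rows other than $r$ and $r+1$ are untouched and tracking column indices across the move --- should be routine, and the lemma, hence the Fact, follows.
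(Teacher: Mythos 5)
Your overall strategy is the standard one and, in effect, the only ``proof'' the paper has: the paper states this as a Fact without argument, deferring to Fulton, and Fulton's proof is exactly your two-step reduction (every slide preserves the type A Knuth class of the reading word; by Theorem \ref{t2} together with $P(\row U)=U$ for a straight-shape SSYT $U$, each Knuth class contains at most one straight-shape tableau, so every rectification equals $P(\row S)$). That formal reduction, the transitivity bookkeeping, and the horizontal-move case (where the reading word is literally unchanged) are all correct as you present them.

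The gap is that the vertical-move case --- which carries the entire mathematical content of the Fact, since everything else in your write-up is formal --- is never proved; you only assert that the hop of the moved letter can be realized by ``matching each successive transposition of adjacent letters against one of the admissible patterns $x<y\le z$ demanded by $K_{1}$--$K_{4}$.'' As literally written, this plan fails precisely at repeated letters, which slides in semistandard tableaux unavoidably produce. Concretely, take the punctured skew tableau whose bottom row is $2\,3$ and whose top row is a hole followed by $2$; the slide rule moves the $2$ below the hole upward, turning the reading word $232$ into $322$. These two words must be Knuth equivalent for your lemma to hold, yet under the condition $x<y\le z$ imposed on all four moves (this is how the paper states $K_{3}$, $K_{4}$; it is a typo there), no elementary transformation applies to $232$ at all, since the two $2$'s are equal. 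One needs the correct asymmetric form of Knuth's relations --- $x\le y<z$ for $K_{3}$, $K_{4}$ --- and the equal-letter cases (the rising entry equal to the entry on its upper right, or to entries in its own row) are exactly where this distinction matters.

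Even granting the corrected relations, the vertical case is not a bare left-to-right hop: each swap of the moved letter with a neighbor requires a third letter adjacent to the pair satisfying the right inequalities, and arranging for such witnesses to be available at every step is the substance of Fulton's two-row lemma, which is proved by an induction on the configuration rather than by inspecting one transposition at a time. So: right route, right reduction, but the lemma on which everything rests is left as a sketch, and the sketch's literal matching criterion is false as stated.
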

We write this SSYT, the \kyo{rectification} of $S$, as $\rect{S}$.

\begin{fact}
Let $S_{1}, S_{2}$ be skew tableaux. Then 
$\row S_{1}\ka\row S_{2}$
$\iff \rect S_{1}=\rect S_{2}.$
\end{fact}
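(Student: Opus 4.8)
The plan is to route the whole statement through the single identity $\rect S = P(\row S)$, which collapses the theorem into Theorem \ref{t2}. The key lemma I would prove is that jeu de taquin preserves the Knuth class of the reading word, namely $\row S \ka \row(\rect S)$ for every skew tableau $S$. Granting this, observe that $\rect S$ is an honest SSYT, so $P(\row(\rect S)) = \rect S$: applying the uniqueness theorem to the word $w = \row(\rect S)$, the tableau $\rect S$ is the unique SSYT whose row word is Knuth equivalent to $\row(\rect S)$, and that unique tableau is $P(w)$. Feeding the key lemma into Theorem \ref{t2} then gives $P(\row S) = P(\row(\rect S)) = \rect S$. The theorem now follows at once: $\row S_{1}\ka\row S_{2} \iff P(\row S_{1}) = P(\row S_{2}) \iff \rect S_{1} = \rect S_{2}$, where the first equivalence is Theorem \ref{t2} and the second is the identity just established.

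So the entire content reduces to the key lemma. Since rectification is independent of the order of slides (the fact stated just above), $\rect S$ is reached from $S$ by a finite sequence of elementary slides, each moving the empty box one cell down or to the right. It therefore suffices to show that a single elementary slide leaves the reading word in the same Knuth class, after which induction on the number of slides yields $\row S \ka \row(\rect S)$. I would localize the argument: such a slide changes only the row containing the hole and, in the vertical case, the row immediately below it, so only those rows' contribution to the bottom-to-top, left-to-right reading word is altered. Restricting to this two-row window, I would enumerate the admissible configurations of the hole together with its right neighbor and its lower neighbor, and check in each case that the change in the reading word is a composition of the elementary Knuth moves $K_{1}$--$K_{4}$. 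The slide's tie-break ($x \le y$ moving the hole down, $x > y$ moving it to the right, in the displayed diagram) aligns exactly with the strict and weak inequalities appearing in the definitions of those moves.

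The main obstacle is precisely this per-slide case analysis. One must separately treat horizontal and vertical moves, the hole sitting at an interior cell versus at the end of its row, and the boundary situations where the two rows in the window have unequal lengths or where a neighbor is absent, confirming in every instance that the induced transformation of the reading word is realized by Knuth moves. Every other step needs nothing beyond transitivity of $\ka$ and the already-cited theorems, so the difficulty is concentrated entirely in verifying the single-slide invariance of the Knuth class.
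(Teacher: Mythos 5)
Your global reduction is sound, and it is essentially the standard route: the paper itself offers no proof of this Fact (it is quoted from Fulton, Chapters 1--2), and Fulton's development is exactly your chain. Conditional on your key lemma $\row S \ka \row(\rect S)$, every step you give is correct: applying the paper's uniqueness theorem to the word $\row(\rect S)$ gives $P(\row(\rect S)) = \rect S$, Theorem \ref{t2} then gives $P(\row S) = P(\row(\rect S)) = \rect S$, and the stated equivalence follows formally. The localization to the two affected rows is also legitimate, because Knuth equivalence is a congruence: an elementary move applied inside a factor of a word is an elementary move of the whole word, so it suffices to compare the contributions of the two rows touched by the slide.

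The gap is in your plan for proving the key lemma itself. First, the horizontal case is vacuous: sliding the hole within a row does not change the reading word at all, since the letters of that row are read in the same order regardless of where the hole sits. But the vertical case is genuinely non-local, and this is where your method would fail. Write the hole's row as $A$ (letters left of the hole) followed by $D$ (letters right of it), and the row below as $BxC$ with $x$ directly beneath the hole. Before the slide the two-row contribution to the reading word is $BxCAD$; after the slide it is $BCAxD$. So the single letter $x$ must be carried past the whole of $C$ and the whole of $A$. No enumeration of the finitely many configurations of ``the hole, its right neighbor, and its lower neighbor'' can certify this equivalence: the required chain of $K_{1}$--$K_{4}$ moves has length growing with $|A|+|C|$, and which move applies at each stage depends on all the letters of $A$, $C$, and $D$, not just those adjacent to the hole. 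The missing ingredient is an induction on the lengths of the two rows (this is precisely where Fulton's proof of the slide lemma does its real work), or some equally non-local substitute such as an invariants argument. As written, your per-slide verification breaks down for any vertical slide in which the affected rows are longer than the local window you propose to inspect.
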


%
%

%

%
%

\section{type C theory}

\subsection{King tableaux\ok}

Consider barred numbers $\oli, \ds, \olk$ 
and let $[\ol{k}]=\{1, \ol{1}, \ds, k, \ol{k}\}$.
Define the \emph{symplectic order} 
\[
1<\ol{1}<2<\ol{2}<\cd<k<\ol{k}.
\]


\begin{defn}\ok
A \emph{King} \emph{tableau} (KT) of shape $\l$ is a filling of the Young diagram of $\l$ on the alphabet $[\olk]$ 
with all of the following.
\begin{enumerate}
\item Entries are weakly increasing along rows.
\item Entries are strictly increasing down columns.
\item All entries in row $i$ are $\ge i$ (symplectic condition).
\end{enumerate}
Denote by $\kt_{k}(\l)$ the set of King tableaux 
of entries $[\olk]$ of shape $\l$. 
Also let $\kt_{k}=\cup_{\l}\kt_{k}(\l)$
and $\kt=\cup_{k}\kt_{k}$. 
Besides, 
$\ssyt_{\olk}$ means the set of 
all SSYTs of entries $[\olk]$.
\end{defn}
\begin{rmk}
Some authors call such 
\emph{symplectic tableaux}. 
\end{rmk}
\begin{ex}
We see 
$T=
\ytableausetup{centertableaux}
\begin{ytableau}
 1 & 1 & 2 &{\ol{3}}\\
 2& \ol{2} &\ol{2}   \\
\ol{3}  
\end{ytableau}
\in \kt_{3}$
while 
$U=
\ytableausetup{centertableaux}
\begin{ytableau}
 1 & 1 & 2 &{\ol{3}}\\
 2& \ol{2} &\ol{2}   \\
\lg\ol{2}  
\end{ytableau}$
is merely in $\ssyt_{\ol{3}}$ with violation of symplectic condition at the first column. 
\end{ex}

\subsection{Berele insertion\ok}

We have seen (row-)insertion 
 $T\from x$ for $T\in \ssyt$  and $x\in [k]$. 
Let us write this also as $T\from_{A} x$ whenever we wish to emphasize it. As analogy of this, 
we are going to define \emph{Berele insertion} 
$T\from x$ (still the same notation, or $T\from_{C} x$ if necessary) for a KT $T$ and $x\in [\olk]$. 

Define $T\from x$ with a series of row-insertions of type A except the following case:

If row $i$ contains at least one $\ol{i}$, 
and we row-insert $i$ to this row, then, instead of adding $i$ and bumping $\ol{i}$ to the $(i+1)$-st row, 
\begin{enumerate}
\item replace the first $\ol{i}$ to $i$,
\item replace the first $i$ to empty box,
\end{enumerate}
\[
\te{row $i$:\,}
\begin{ytableau}
 i &i &\cd&i&\ol{i}&\ol{i}&\cd
\end{ytableau}
\to 
\begin{ytableau}
\lg{}\mb{}&i &\cd&i&{i}&\ol{i}&\cd
\end{ytableau}
\]
and play jeu de taquin (as an $\ssyt$ of entries $[\olk]$) with this empty box. 
As Berele showed \cite{be}, the resulting tableau is always King.
\begin{ex}\label{exb}
Let 
$
T=
\begin{ytableau}
	1&2&4\\
	\ol{2}&\ol{2}&6\\
	3&\ol{4}\\
	5\\
\end{ytableau}$, 
$x=\oli$. 
Inserting $x$ to the first row bumps 2 to the second. 
By definition of Berele insertion, 
this 2 kills the leftmost $\ol{2}$ in the first column to  produce an empty box; 
say this punctured tableau is $T_{1}$.
Sliding it twice to a corner and 
forget it after all, we have $T\from_{C}x$.
\[
T_{1}=
\begin{ytableau}
	1&\ol{1}&4\\
	\lg&\ol{2}&6\\
	3&\ol{4}\\
	5\\
\end{ytableau}
\xto[\te{slide}]{}
\begin{ytableau}
	1&\ol{1}&4\\
	\olii&\ol{4}&6\\
	3&\lg{}\\
	5\\
\end{ytableau}
\xto[]{}
\begin{ytableau}
	1&\ol{1}&4\\
	\olii&\ol{4}&6\\
	3\\
	5\\
\end{ytableau}=T\from_{C}x.
\]
Alternatively, we can regard this process as follows: 
First, consider $T\from_{A}x$ instead 
and replace the domino violating symplectic condition by two empty boxes; say we got $T_{2}$. 
\[
\begin{ytableau}
	1&\oli&4\\
	2&\ol{2}&6\\
	\ol{2}&\ol{4}\\
	3\\
	5\\
\end{ytableau}
\too
\begin{ytableau}
	1&\oli&4\\
	\lg{}&\ol{2}&6\\
	\lg{}&\ol{4}\\
	3\\
	5\\
\end{ytableau}=T_{2}.
\]
Now, playing jeu de taquin from $T_{2}$ also results in 
$T\from_{C} x$. 
This is because two skew tableaux below the first row of $T_{1}, T_{2}$
\[
\begin{ytableau}
	\lg{}&\ol{2}&6\\
	3&\ol{4}\\
	5\\
\end{ytableau}
\q\text{ and }
\q\begin{ytableau}
	\lg{}&\ol{2}&6\\
	\lg{}&\ol{4}\\
	3\\
	5\\
\end{ytableau}
\]
with identical row word $53\ol{4}\ol{2}6$
end up with the same SSYT after jeu de taquin.
\end{ex}

\subsection{type C RS correspondence\ok}

Here, we review type C RS (or Berele-Robinson-Schesnted) correspondence. To describe an appropriate $Q$-symbol for this case, we need several terms and definitions. For partitions $\l$ and $\mu$, 
write $\l\lhd \mu$ if $\l\subset \mu$ and 
$|\mu|=|\l|+1$.

\begin{defn}\ok 
An \emph{oscillating tableau} (OT) $S$ of shape $\l$ of length $n$ is a sequence of partitions 
\[
S=(\l_{0}, \l_{1}, \ds, \l_{n})
\]
such that $\l_{0}=\ku$, $\l_{n}=\l$ and for each $j$, 
$\l_{j}\lhd \l_{j+1}$ or $\l_{j+1}\lhd \l_{j}$. 
It is a $k$-OT 
if each $\l_{j}$ has at most $k$ rows. 
Denote by $\OT_{k,n}(\l)$ the set of such OTs.
\end{defn}
\begin{rmk}
Some authors call such \eh{up-down tableaux}.
\end{rmk}

In the sequel, a word $w$ means an element of $ [\olk]^{*}$, the set of all words in $[\olk]$ unless otherwise mentioned. 
For a word $w=w_{1}\cd w_{n}$, we construct a pair $(P(w), Q(w))\in \kt_{k}(\l)\ti \OT_{k,n}(\l)$ as follows. 
Start with 
$P_{0}(w)=\ku, Q_{0}(w)=\ku$. For $j\ge 1$, define 
\[
P_{j}(w)=P_{j-1}(w)\from_{C} w_{j}, \q Q_{j}(w)=\text{sh\,} P_{j}(w). 
\]
Finally, let 
$P(w)=P_{n}(w), Q(w)=(Q_{j}(w))_{j=0}^{n}$. 
\begin{thm}[RS correspondence of type C \cite{be}]\ok
\label{t3}
\[
\Phi_{\te{Ber}}:
[\olk]_{n}^{*} \to 
\bigcup_{\el(\l)\le k}
\kt_{k}(\l)\ti \OT_{k,n}(\l), 
\q 
w\mto (P(w), Q(w))
\]
is a bijection.
\end{thm}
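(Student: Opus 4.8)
The plan is to prove that $\Phi_{\te{Ber}}$ is a bijection by producing an explicit two-sided inverse $\Psi_{\te{Ber}}$, obtained by running Berele insertion backwards and using the recording oscillating tableau $Q$ as a guide. First I would check that $\Phi_{\te{Ber}}$ is well-defined, i.e.\ that its image lies in the asserted codomain. By Berele's theorem each intermediate tableau $P_{j}(w)$ is a King tableau; in a King tableau every entry of row $i$ is $\ge i$ while the alphabet $[\olk]$ has largest letter $\ol{k}$, so no King tableau has more than $k$ rows and hence $\el(\sh P_{j}(w))\le k$. Moreover a single Berele insertion changes the shape in exactly one of two ways: either it behaves as an ordinary type A insertion that terminates by appending one new outer box (shape grows by one), or it triggers the cancellation, creating an interior empty box which jeu de taquin carries to a corner and deletes (shape shrinks by one). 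Thus consecutive shapes satisfy $\l_{j-1}\lhd\l_{j}$ or $\l_{j}\lhd\l_{j-1}$, so $Q(w)\in\OT_{k,n}(\l)$ with $\l=\sh P(w)$.

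To construct $\Psi_{\te{Ber}}$, I read $Q=(\l_{0},\dots,\l_{n})$ from $j=n$ down to $j=1$, maintaining a King tableau $P_{j}$ (with $P_{n}=P$) and recovering at each step the letter $w_{j}$ together with $P_{j-1}$ of shape $\l_{j-1}$. There are two cases, dictated by $Q$. If $\l_{j-1}\lhd\l_{j}$, the forward step was a plain type A row-insertion ending at the outer box $b=\l_{j}\setminus\l_{j-1}$; I apply reverse type A row-bumping started from $b$ (the inverse of $\from_{A}$ furnished by Theorem~\ref{t1}) to eject a letter $x$ and a tableau $P_{j-1}$, and set $w_{j}=x$. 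If instead $\l_{j}\lhd\l_{j-1}$, the forward step was a cancellation: an empty box, created in the first column of some row $i$, was slid by jeu de taquin to the corner $b=\l_{j-1}\setminus\l_{j}$ and deleted. To invert this I open an empty box at $b$ and run reverse jeu de taquin (the inward slides inverse to the forward ones) until the empty box returns to its point of creation in the first column, at row $i$; there I restore the cancelled $\ol{i}$, recover the value $i$ that had been bumped into row $i$, and reverse-bump upward through rows $i-1,\dots,1$ to eject $w_{j}$ and leave $P_{j-1}$.

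Finally I would verify $\Psi_{\te{Ber}}\circ\Phi_{\te{Ber}}=\mathrm{id}$ and $\Phi_{\te{Ber}}\circ\Psi_{\te{Ber}}=\mathrm{id}$; both reduce to the statement that one Berele insertion step is inverted by the corresponding reverse step above, after which induction on $n$ finishes the argument. The add-a-box case is exactly the type A theory, so it is handled by the invertibility of $\from_{A}$ in Theorem~\ref{t1}. The main obstacle is the cancellation case: I must show the reverse inward slides are forced and terminate at precisely the row $i$ where the forward cancellation occurred, so that the restored $\ol{i}$, the ejected letter, and the resulting $P_{j-1}$ are uniquely determined and $P_{j-1}$ is again King. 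Here the reversibility of individual jeu de taquin slides (the confluence and slide facts recorded in Section~2) guarantees that the empty box retraces the forward path, while the symplectic condition, governing where $\ol{i}$'s may sit relative to the entries below them, is what pins down the correct stopping row and certifies that the reconstruction stays within King tableaux. Establishing this matching cleanly is the crux of the proof.
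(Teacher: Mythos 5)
First, a point of comparison: the paper does not prove Theorem \ref{t3} at all. It is imported from Berele \cite{be} (the label of the theorem cites him) and is then used as a black box, for instance inside the proof of Theorem \ref{t5}. So your attempt can only be measured against Berele's original argument, and your outline is essentially that argument: check well-definedness (your argument that King tableaux have at most $k$ rows and that each Berele step changes the shape by exactly one box is fine), then build a step-by-step inverse $\Psi_{\te{Ber}}$ guided by $Q$, undoing a type A bump when $\lambda_{j-1}\lhd\lambda_{j}$ (via Theorem \ref{t1}) and undoing a cancellation when $\lambda_{j}\lhd\lambda_{j-1}$.

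However, the attempt has a genuine gap exactly at the point you flag and then defer. In the deletion case your instruction is to ``run reverse jeu de taquin until the empty box returns to its point of creation in the first column, at row $i$,'' but the point of creation is information about the \emph{forward} computation, which the inverse map may not use: its only inputs are $P_{j}$ and the corner $\lambda_{j-1}\setminus\lambda_{j}$. Reversibility of individual slides says each inward slide undoes an outward slide; it does \emph{not} supply a stopping rule. Starting from the vacated corner, the deterministic inward slides would happily continue past $(i,1)$ up the first column toward $(1,1)$, and one cannot even stop ``upon first reaching column $1$'': the forward slide path may itself begin by travelling down column $1$ (for example, if row $i$ of the King tableau is $\ol{i}\,,\ol{i+1},\dots$ and the entry below the $\ol{i}$ is $i+1$, then after the cancellation the hole at $(i,1)$ slides down, not right), so the reverse path meets column $1$ more than once. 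What Berele actually proves --- and what is absent here --- is that there is a \emph{unique} row $i$ at which halting the inward slides, filling the hole, turning the appropriate $i$ back into $\ol{i}$, and reverse-bumping up through rows $i-1,\dots,1$ yields a King tableau of shape $\lambda_{j-1}$ whose forward Berele insertion recreates $P_{j}$; the symplectic condition is used to prove both existence and uniqueness of that row. Without this lemma $\Psi_{\te{Ber}}$ is not even well defined, so the concluding induction never starts. Your final paragraph acknowledges this as ``the crux,'' which is honest, but it means the attempt is an accurate outline of Berele's proof rather than a proof.
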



\begin{ex}\ok
For $w=\olii 2 \olii 2 1 \oli $, 
Berele insertions go as follows.
\[
\begin{ytableau}
\olii
\end{ytableau}
\to 
\begin{ytableau}
2\\
\olii
\end{ytableau}
\to 
\begin{ytableau}
2&\olii\\
\olii
\end{ytableau}
\to  
\begin{ytableau}
2&2\\
\olii&\olii
\end{ytableau}
\to
\begin{ytableau}
1&2\\
\olii
\end{ytableau} 
\to 
\begin{ytableau}
1&\ol{1}
\end{ytableau}
\]
\[
(P(w), Q(w))=
\k
{\,\begin{ytableau}
1&\ol{1}
\end{ytableau}\,, 
\k{
\yd{1}\,,\yd{1,1}\,,\yd{2, 1}\,, \yd{2, 2}\,, 
\yd{2, 1}\,, \yd{2} 
\,}
}
\]
\end{ex}

\begin{rmk}
For convenience, we 
write $P_{A}(w)$ and $P_{C}(w)$ 
for the SSYT and the KT constructed by type A and type C insertions from $w$, respectively. 
In Sundaram's notation \cite{sun1}, 
\[
(\ku\from w)_{1}=P_{A}(w), \q
(\ku\from w)_{2}=Q_{A}(w), \q
(\ku
\overset{\mathcal{B}}{\longleftarrow}w)_{1}=P_{C}(w).
\]
\end{rmk}

\subsection{type C Knuth equivalence\ok}

How can we introduce type C Knuth equivalence for words? Obviously, we need to understand what happens  to a word when a Belere insertion deletes a box. 
As seen in Example \ref{exb}, in such a case, two 
empty boxes arise in the first column. 
Hence we should discuss words coming from a single column.

\begin{defn}\ok
A word 
$c=c_{n}\cd c_{1}$ is a \emph{column word} if 
$c_{i+1}>c_{i}$ for all $i$. 

For column words $c, c'$, 
define $c\sim c'$ if 
$P_{C}(c)=P_{C}(c')$.
\end{defn}

\begin{ex}
Let 
\[
c=\ol{7}7\ol{6}{6}\ol{5}\ol{4}43\ol{2}2,
\q 
c'=\ol{5}3\ol{2}2.
\]
Then $P_{C}(c)=P_{C}(c')$ and thus $c\sim c'$.
\end{ex}

\begin{defn}
Define $w\equiv_{5}w'$
if there exist some word $v$ and 
column words $c, c'$ 
such that 
$w\ka cv, w'\ka c'v$ 
and $c\sim c'$. 
In such a case, say that we obtained $w$ from $w'$ (or the other way around) by $K_{5}$ transformation. (Observe that $P_{C}(w)=P_{C}(w')$)
\end{defn}

\begin{defn}
Recall that we introduced 
$K_{j}$ transformation $(j=1, \ds, 5)$. $w$ and $w'$ are \kyo{type C Knuth equivalent} 
($w\kc w'$) 
if there exists a sequence of words 
$w_{0}, w_{1}, \ds, w_{n}$ such that 
$w_{0}=w, w_{n}=w'$ and 
$w_{i+1}$ is obtained from $w_{i}$ by some $K_{j}$ transformation.
\end{defn}
\begin{rmk}\ok
Clearly, $w\equiv_{A} w'\then w\rr w'$ 
since we can move $w$ to $w'$ 
with only $K_{j}$ transformations, $j=1, 2, 3, 4$.
\end{rmk}

\begin{lem}\label{l1}\ok
For any KT $T$ and a letter $x$, 
\[
\row(T\from_{C} x)\rr (\rowt)x.
\]
\end{lem}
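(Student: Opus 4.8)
The plan is to reduce everything to the ordinary type A insertion $T\from_{A}x$ and to absorb the single discrepancy into one $K_{5}$ transformation. If inserting $x$ never triggers the special (box-deleting) case, then $T\from_{C}x=T\from_{A}x$, so Theorem \ref{th} gives $\row(T\from_{C}x)=\row(T\from_{A}x)\ka(\rowt)x$, and since type A equivalence implies $\rr$ we are finished. So I would concentrate on the case where the special case fires, say with value $i$ inserted into row $i$. Here I use the reformulation of Example \ref{exb}: put $U=T\from_{A}x$, delete the designated vertical domino, and rectify. First I check this domino sits in column $1$, with $i$ at $(i,1)$ and $\ol{i}$ at $(i+1,1)$. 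Indeed, when $i$ is inserted into row $i$ it bumps the leftmost $\ol{i}$; the boxes to the left of that $\ol{i}$ are $\ge i$ and $<\ol{i}$, hence all equal to $i$, so $(i,1)=i$. The bumped $\ol{i}$ then meets the (as yet untouched) row $i+1$ of the King tableau, all of whose entries exceed $\ol{i}$ (because $i+1>\ol{i}$), so it bumps the leftmost entry and settles at $(i+1,1)$; the later cascade never revisits this row. Writing $S$ for $U$ with these two boxes removed, the reformulation gives $T\from_{C}x=\rect S$.

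By Theorem \ref{th} we have $\row U\ka(\rowt)x$, and since rectification preserves the type A Knuth class, $\row(T\from_{C}x)=\row(\rect S)\ka\row S$. By transitivity it therefore suffices to prove $\row S\rr\row U$, which I would obtain from a single $K_{5}$ move. Let $C_{1}$ be the first-column word of $U$ (read bottom to top) and $C_{1}'$ that of $S$; since the deleted boxes lie in column $1$ at rows $i,i+1$, the word $C_{1}$ contains the adjacent factor $\ol{i}\,i$ and $C_{1}'$ is exactly $C_{1}$ with this factor deleted, while the columns of index $\ge 2$ are identical in $U$ and $S$. Using the Fact $\rowt\ka\col T$ for the straight tableau $U$, and its skew version for $S$ (which follows from Theorem \ref{t2}, since the row and column readings of any semistandard skew tableau both insert to its rectification), I write $\row U\ka\col U=C_{1}v$ and $\row S\ka\col S=C_{1}'v$ with the common tail $v=C_{2}\cdots C_{p}$. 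Taking $c=C_{1},\ c'=C_{1}'$ in the definition of $\equiv_{5}$, everything reduces to the single statement $c\sim c'$, i.e.\ $P_{C}(C_{1})=P_{C}(C_{1}')$.

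This column-word identity is the crux, and the step I expect to be the main obstacle. I would prove it by directly tracking the Berele insertion of a strictly decreasing word. Berele-inserting a decreasing word always builds a single column, since each new (smaller) letter cascades straight down, and any internal adjacent pair $\ol{j}\,j$ annihilates when reached (the special case empties the top box and the hole slides to the bottom of the column), so the intermediate shape is always one column. Writing $C_{1}=A\,\ol{i}\,i\,B$ with $A$ decreasing and every letter of $A$ exceeding $\ol{i}$, insertion of $A$ yields a single column all of whose entries exceed $\ol{i}$; inserting the barred letter $\ol{i}$ (never a special case) merely prepends it on top, and inserting the unbarred $i$ then triggers the special case against this top $\ol{i}$, emptying it and sliding the hole to the bottom, which restores exactly the column produced by $A$ alone. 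Hence Berele insertion of $A\,\ol{i}\,i$ and of $A$ agree, and feeding in the remaining letters $B$ afterward keeps them equal, giving $P_{C}(C_{1})=P_{C}(C_{1}')$ and $c\sim c'$. The $K_{5}$ transformation then yields $\row S\equiv_{5}\row U$, and chaining $\row(T\from_{C}x)\ka\row S$, $\row S\equiv_{5}\row U$, and $\row U\ka(\rowt)x$ through the transitivity of $\rr$ completes the proof.
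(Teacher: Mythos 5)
Your overall route is the same as the paper's: split into the addition/deletion cases, locate the violating domino in column $1$ at rows $i,i+1$ of $U=T\from_{A}x$, factor the column words as $cv$ and $c'v$, and finish with a single $K_{5}$ move. Your location of the domino and the skew-word/rectification manipulations are fine, and your instinct that $c\sim c'$ is ``the crux'' is correct --- the paper is tersest exactly there. But your proof of that crux is wrong.

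The error is a misreading of the Berele special case: it fires only when the letter $i$ is row-inserted into \emph{row number} $i$ and that row contains $\ol{i}$; it is \emph{not} triggered whenever $i$ is inserted into some row containing $\ol{i}$. So after you insert $A$ and then $\ol{i}$ (placing $\ol{i}$ at the top of the column), inserting $i$ does not annihilate anything when $i\ge 2$: it is an ordinary type A bump into row $1$, and you get the longer column $[\,i,\ol{i},a_{1},\dots,a_{m}\,]$. Hence your key identity $P_{C}(A\,\ol{i}\,i)=P_{C}(A)$ is false in general; for instance $P_{C}(\ol{3}\,3)$ is the two-box column with entries $3,\ol{3}$, not the empty tableau (your mechanism is accidentally correct only for $i=1$). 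Indeed, if your argument were valid it would show that deleting \emph{any} adjacent pair $\ol{j}\,j$ from \emph{any} strictly decreasing word preserves $P_{C}$, which is false. What actually happens --- visible already in Example~\ref{exb}, where $i=2$ --- is that the pair $i,\ol{i}$ remains in the column and is pushed down one row by each insertion of a letter of $B$; it annihilates precisely when the unbarred $i$ is pushed into row $i$, i.e.\ at the insertion of the last of the $i-1$ letters of $B$. So $B$, which your argument never uses, is essential: one needs that $B$ has exactly $i-1$ letters and that $c_{j}\ge j$ for $j\le i-1$ (these hold because jeu de taquin never touches rows $<i$, so $c_{1},\dots,c_{i-1}$ are first-column entries of the King tableau $T\from_{C}x$), which is what forces the cancellation to occur at the right moment and rules out any spurious special case on the way down. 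A clean repair along your lines: observe that the shifted column $[\,c_{2},\dots,c_{n}\,]$ is a King column, so $P_{C}(c_{n}\cdots c_{2})$ equals that column by Lemma~\ref{l2}, and then track the single insertion of $c_{1}$, whose bumping chain hits the special case exactly at row $i$; the same two facts also give $P_{C}(c')$ directly, yielding $c\sim c'$.
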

\begin{proof}
If $T\from_{C} x$ adds a box, 
then 
\[
\row(T_{C}\from x)= \row(T_{A}\from x)
\equ_{A}(\rowt)x
\]
and thus 
$\row(T\from_{C} x)\equ_{C}(\rowt)x$.
Suppose next that $T\from_{C} x$ deletes a box.
Then $T\from_{A} x$ must look like
\[
\begin{ytableau}
	\mb{}&\mb{}&\mb{}&\mb{}&\mb{}&\\
	\mb{}&\mb{}&\mb{}&\mb{}&\mb{}&\\
	\lg{}i&\mb{}&\mb{}&\\
	\lg{}\ol{i}&\mb{}&\\
	\mb{}&\\
	\mb{}&\\
\end{ytableau}
\]
for some (unique) $i$ with $i$ in row $i$ 
and $\ol{i}$ in row $i+1$. 
In other words, 
\[
\row(T\from_{A}x)\ka \col (T\from_{A}x)
=cv
\]
for some column word 
$c=c_{n}\cd c_{1}$, 
$(c_{i+1}, c_{i})=(\ol{i}, i)$ and a word $v$ 
for other columns $j\ge2$.
Now, by definition of Berele algorithm, 
$T\from_{C}x$ is exactly 
the unique SSYT whose column (row) word is 
$\ka c'v$ where $c'=c_{n}\cd c_{i+2}c_{i-1}\cd c_{1}$ (that is, $c\sim c'$). Thus, 
\[
\row (T\from_{C}x)\ka c'v,
(\row T)x\ka \row (T\from_{A}x)=cv,
\]
and $c\sim c'$ imply 
$(\row T)x\kc \row (T\from_{C} x)$.
\end{proof}

\begin{lem}\label{l2}
For any KT $T$, we have
$P_{C}(\row T)=T$.
\end{lem}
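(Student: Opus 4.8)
The plan is to prove the identity by showing that the Berele insertion of $\row T$ never deletes a box, so that it runs letter-for-letter as the ordinary type A Schensted insertion of the same word, and then invoking the type A reconstruction identity $P_{A}(\row T)=T$.

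I would first dispose of the type A input. A King tableau is in particular a semistandard tableau on the symplectically ordered alphabet $[\olk]$, so the type A theorem asserting that every word $w$ is Knuth equivalent to the row word of a unique SSYT, namely $P_{A}(w)$, applies to $w=\row T$. Since $\row T\ka\row T$ and $T$ is itself such an SSYT, uniqueness gives $P_{A}(\row T)=T$.

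The heart of the argument is the following sub-claim about the \emph{type A} insertion of $\row T$: no bump is ever of the form ``an unbarred letter $p$ inserted into row $p$ bumps an $\ol{p}$ into row $p+1$''. This configuration is exactly the one in which Berele insertion diverges from type A insertion, since $p$ and $\ol{p}$ are consecutive in the symplectic order, so inserting $p$ into a row $p$ that already contains $\ol{p}$ bumps the first $\ol{p}$, which is precisely the case where Berele's rule removes a box instead. Granting the sub-claim, the Berele and type A procedures agree at every step on $\row T$, hence $P_{C}(\row T)=P_{A}(\row T)=T$.

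To prove the sub-claim I would argue by contradiction entirely inside the type A process. If such a bump occurred, the displaced $\ol{p}$ would land in some cell $(p+1,c)$ of row $p+1$. In type A insertion no box is ever removed and, at a fixed cell, the entry can only decrease under later bumps; thus from that moment on the value at $(p+1,c)$ remains $\le\ol{p}$. In particular the final tableau, which is $T$, would satisfy $T(p+1,c)\le\ol{p}<p+1$, contradicting the symplectic condition that every entry of row $p+1$ of a King tableau is $\ge p+1$. The main obstacle is exactly this bookkeeping: one must verify that the offending cell persists to the end of the insertion and that its value is monotonically non-increasing, so that the symplectic condition of $T$ is genuinely violated; once that is in hand, the rest is formal.
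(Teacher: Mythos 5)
Your proof is correct, and it takes a genuinely different route from the paper's. The paper argues by induction on the number of rows of $T$: with $T'$ the King tableau formed by rows $2,\dots,r$ and $x_{1}\cdots x_{n}$ the first row, it writes $\row T=(\row T')x_{1}\cdots x_{n}$, applies the induction hypothesis $P_{C}(\row T')=T'$, and then asserts that the successive Berele insertions $((T'\from x_{1})\from \cdots)\from x_{n}$ rebuild $T$, in analogy with the type A row-bumping reconstruction. That final equality silently uses exactly the fact you isolate as your key sub-claim: no Berele deletion is ever triggered. Your route makes this the heart of the proof: if during the type A insertion of $\row T$ an unbarred $p$ entered row $p$ and bumped an $\ol{p}$ into some cell $(p+1,c)$, then, since Schensted insertion never removes boxes and each bump strictly decreases the entry of the affected cell, the final tableau $P_{A}(\row T)$ would carry an entry $\le \ol{p}<p+1$ in row $p+1$; since the type A uniqueness theorem (applied over the totally ordered alphabet $[\olk]$, exactly as the paper itself does elsewhere, e.g.\ in the proof of Theorem \ref{t4}) gives $P_{A}(\row T)=T$, this contradicts the symplectic condition. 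Hence Berele and Schensted insertion agree letter for letter on $\row T$, and $P_{C}(\row T)=P_{A}(\row T)=T$. What your approach buys is an induction-free, self-contained argument together with a stronger by-product of independent use: for a King tableau, Berele insertion of its row word never deletes and coincides step by step with type A insertion. What the paper's induction buys is brevity and the familiar peel-off-the-first-row structure, but as written its last step leaves the no-deletion verification implicit; the monotonicity and persistence bookkeeping you flag as the main obstacle is routine and is precisely what closes that gap.
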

\begin{proof}
We prove 
$P_{C}(\row T)=T$ by induction on the number $r$ 
of rows of $T$. If $r=1$, 
say $T=
\begin{ytableau}
	x_{1}&\cd&x_{n}\\
\end{ytableau}$, $x_{i}\le x_{i+1}$. Clearly, 
\[
P_{C}(\row T)
=\begin{ytableau}
	x_{1}&\cd&x_{n}\\
\end{ytableau}=T.
\]
Suppose $r\ge 2$. Let $T'$ be the KT with 
rows $2, \ds, r$ of $T$ (satisfying symplectic condition automatically). Again, say the first row of $T$ is 
\[
\begin{ytableau}
	x_{1}&\cd &x_{n}\\
\end{ytableau},
\]
$x_{i}\le x_{i+1}$. By induction, we have 
\[
P_{C}(\row T)=P_{C}(\row T' x_{1}\cd  x_{n})=
((P_{C}(\row T') \from x_{1})\from \cd )\from x_{n}
\]
\[=((T' \from x_{1})\from \cd )\from x_{n}
=T.
\]
This proves the claim.
\end{proof}

\begin{thm}\label{t4}
For each $w\in [\olk]^{*}$, there exists a unique 
$T\in \kt_{k}$ such that 
\[
w\equ_{C}\row T.
\]
In particular, we have $T=P_{C}(w)$.
\end{thm}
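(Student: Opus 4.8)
The plan is to prove existence by building $\row T$ up one letter at a time with Lemma~\ref{l1}, and then to observe that uniqueness is equivalent to the invariance of the Berele tableau $P_{C}$ under $\kc$, which is where the real work lies.

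\textbf{Existence.} I would set $T=P_{C}(w)$, write $w=w_{1}\cd w_{n}$, and prove by induction on $j$ that $\row P_{j}(w)\kc w_{1}\cd w_{j}$, where $P_{j}(w)$ is the $j$-th Berele insertion tableau of Theorem~\ref{t3}. The base case $j=0$ is the empty word. For the inductive step, Lemma~\ref{l1} gives
\[
\row P_{j}(w)=\row\bigl(P_{j-1}(w)\from_{C}w_{j}\bigr)\kc\bigl(\row P_{j-1}(w)\bigr)w_{j},
\]
and since $\kc$ is a congruence for concatenation on the right — each move $K_{1},\ds,K_{4}$ acts on a fixed length-three factor, while for $K_{5}$ one replaces the common tail $v$ by $vx$ — the inductive hypothesis yields $\bigl(\row P_{j-1}(w)\bigr)w_{j}\kc w_{1}\cd w_{j}$. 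Taking $j=n$ gives $w\kc\row T$.

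\textbf{Reduction of uniqueness.} Suppose $w\kc\row T$ and $w\kc\row T'$ for King tableaux $T,T'$; then $\row T\kc\row T'$. By Lemma~\ref{l2}, $P_{C}(\row T)=T$ and $P_{C}(\row T')=T'$, so it suffices to prove that $\kc$ preserves $P_{C}$, that is, $u\kc u'\Rightarrow P_{C}(u)=P_{C}(u')$. It is enough to check this on the generating moves. The $K_{5}$ case reduces to type A: if $u\ka cv$, $u'\ka c'v$ with $P_{C}(c)=P_{C}(c')$, then — granting invariance under $\ka$ — we have $P_{C}(u)=P_{C}(cv)$ and $P_{C}(u')=P_{C}(c'v)$, and since Berele insertion of the common suffix $v$ into the equal tableaux $P_{C}(c)=P_{C}(c')$ produces identical output, $P_{C}(cv)=P_{C}(c'v)$. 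Thus every generator is accounted for once I establish invariance under $K_{1},\ds,K_{4}$, equivalently (by Theorem~\ref{t2}, which identifies $\ka$-classes with values of $P_{A}$) that $P_{C}(w)$ depends only on $P_{A}(w)$.

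\textbf{The main obstacle and conclusion.} The crux is therefore the implication $u\ka u'\Rightarrow P_{C}(u)=P_{C}(u')$. I would prove it through the alternative description of Berele insertion recalled in Example~\ref{exb}: perform ordinary type A insertion, then cancel each symplectic-condition-violating domino (an $i$ sitting directly above $\ol{i}$ in one column) by two empty boxes and play jeu de taquin. Unwinding the interleaving of insertions and cancellations should show that $P_{C}(w)$ is obtained from $P_{A}(w)$ by a sequence of such symplectic cancellations, and the type A facts that rectification is well defined and invariant under Knuth equivalence should make the outcome independent of the order of cancellations, hence a function of $P_{A}(w)$ alone. Since $u\ka u'$ gives $P_{A}(u)=P_{A}(u')$ by Theorem~\ref{t2}, invariance would follow. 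Granting this, $\row T\kc\row T'$ forces $T=P_{C}(\row T)=P_{C}(\row T')=T'$; combined with the existence step applied to $T'=P_{C}(w)$, it shows the unique such tableau is $P_{C}(w)$. I expect the delicate point to be exactly the claim that the interleaved Berele cancellations can be postponed to a single, choice-independent symplectic rectification of $P_{A}(w)$, since this requires controlling how each later insertion interacts with the empty-box slides created by earlier cancellations.
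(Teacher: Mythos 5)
Your existence step is correct and coincides with the paper's: both iterate Lemma \ref{l1}, using that $\kc$ is stable under appending a letter on the right (the paper does this silently; your justification, including the $K_{5}$ case, is fine). The genuine gap is in uniqueness. Your reduction is logically sound: uniqueness would indeed follow from ``$u\kc u'\Rightarrow P_{C}(u)=P_{C}(u')$'' together with Lemma \ref{l2}, and your handling of the $K_{5}$ generator, granted the $K_{1},\dots,K_{4}$ case, is correct. But that $K_{1},\dots,K_{4}$ case --- the claim that $u\ka u'$ forces $P_{C}(u)=P_{C}(u')$ --- is never proved; every sentence you devote to it is conditional (``should show'', ``should make''). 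This is not a deferrable technicality: it is the hard content. In the paper's logical order this invariance is contained in the forward implication of Corollary \ref{ct1} (since $\ka$ implies $\kc$), and that corollary is \emph{deduced from} Theorem \ref{t4}; so invoking it, or anything equivalent to it, inside a proof of Theorem \ref{t4} is circular unless you give an independent argument. The mechanism you sketch does not yet constitute one: deleting a violating domino from $P_{A}(w)$ leaves two holes in a configuration that is not a skew tableau, so the standard type A facts about well-definedness of rectification do not apply off the shelf (even the single-deletion case in Example \ref{exb} requires a separate comparison of two punctured tableaux with equal row words), and the assertion that all the interleaved Berele cancellations can be pushed past the subsequent insertions to the very end is exactly the delicate statement that needs proof --- as you yourself concede.

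For comparison, the paper's uniqueness argument avoids this claim entirely and runs in the opposite direction: given $\row T_{1}\kc\row T_{2}$ with $T_{1},T_{2}\in\kt_{k}$, it first argues via Lemma \ref{l2} that the two row words have equal length, then regards $T_{1},T_{2}$ as SSYTs on the alphabet $[\olk]$ and concludes $\row T_{1}\ka\row T_{2}$ (itself stated quite tersely), whence $T_{1}=T_{2}$ by type A theory (Theorem \ref{t2} and Fact \ref{rin}); the invariance you want to prove first is then harvested afterwards, cheaply, as Corollary \ref{ct1}. So to complete your proposal you must either supply a genuine, self-contained proof that type A Knuth moves preserve Berele's $P$-symbol (this is substantial work, carried out in Sundaram's thesis), or restructure the uniqueness step along the paper's lines.
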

\begin{proof}
Say $w=w_{1}\cd w_{n}$ and $P_{m}=P_{C}(w_{1}\cd w_{m})$. 
It follows from Lemma \ref{l1} that 
\begin{align*}
	\row P_{C}(w)&=
	\row(P_{n-1}\from w_{n})
	\\&\rr (\row P_{n-1})w_{n}
	\\&=(\row P_{n-2}\from w_{n-1})w_{n}
	\\&\rr (\row P_{n-2})w_{n-1}w_{n}
	\\&=\cd \rr w_{1}\cd w_{n-1}w_{n}=w.
\end{align*}
To show uniqueness of such a KT $T$, 
suppose $\row T_{1}\equ_{C}\row T_{2}$ for some  $T_{1}, T_{2}\in \kt_{k}$. 
It follows from Lemma \ref{l2} that length of $\row T_{1}$ and $\row T_{2}$ are equal.
Now regarding $T_{1}, T_{2}$ as SSYTs on the alphabet 
$[\olk]$, we have $\row T_{1}\equ_{A}\row T_{2}$. 
It follows from Fact \ref{rin} that 
$T_{1}=T_{2}$. 
\end{proof}

We have seen 
$w\equiv_{A} w'\iff P_{A}(w)=P_{A}(w')$ for type A words. Now, we can state an analogous result. 

\begin{cor}\label{ct1}
\ok
$w\rr w'\iff P_{C}(w)=P_{C}(w')$.
\end{cor}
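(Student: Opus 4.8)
The plan is to prove the corollary as an essentially immediate consequence of Theorem~\ref{t4}, which asserts that every word $w$ is type C Knuth equivalent to $\row P_C(w)$, and that $P_C(w)$ is the \emph{unique} King tableau whose row word is $\equiv_C$-equivalent to $w$. The statement to be shown is the equivalence $w\rr w'\iff P_C(w)=P_C(w')$, so I would prove the two implications separately.

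For the forward direction, suppose $w\rr w'$. By Theorem~\ref{t4} we have $w\equ_C\row P_C(w)$ and $w'\equ_C\row P_C(w')$. Since $\equ_C$ is an equivalence relation (transitive and symmetric, as it is generated by the reversible $K_j$ transformations), combining $\row P_C(w)\equ_C w\rr w'\equ_C\row P_C(w')$ gives $\row P_C(w)\equ_C\row P_C(w')$. Both $P_C(w)$ and $P_C(w')$ are King tableaux, so the uniqueness clause of Theorem~\ref{t4} forces $P_C(w)=P_C(w')$.

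For the reverse direction, suppose $P_C(w)=P_C(w')$. Then by Theorem~\ref{t4} applied to each word,
\[
w\equ_C\row P_C(w)=\row P_C(w')\equ_C w',
\]
and transitivity of $\equ_C$ yields $w\rr w'$ directly.

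I expect no serious obstacle here: the entire content has already been front-loaded into Theorem~\ref{t4}, whose proof invoked the hard lemmas (Lemma~\ref{l1} supplying $\row P_C(w)\rr w$ and Lemma~\ref{l2} supplying uniqueness via the type A injectivity of $\row$). The only point demanding a word of care is that one must know $\equ_C$ is symmetric and transitive; this is built into its definition as the equivalence generated by the $K_1,\ldots,K_5$ transformations, each of which is individually reversible, so the relation is genuinely an equivalence relation and the chaining of $\equ_C$-equivalences above is legitimate.
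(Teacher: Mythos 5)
Your proof is correct, but your forward implication takes a genuinely different route from the paper's. The paper proves $w\equiv_{C}w'\Rightarrow P_{C}(w)=P_{C}(w')$ by reducing to a single elementary move: for $K_{5}$ the equality is exactly the observation built into the definition of the $K_{5}$ transformation, and for $K_{1},\dots,K_{4}$ the paper asserts the invariance of $P_{C}$ as trivial. You instead deduce the forward direction purely formally from Theorem~\ref{t4}: since $w\equiv_{C}\row P_{C}(w)$ and $w\equiv_{C}w'\equiv_{C}\row P_{C}(w')$, both $P_{C}(w)$ and $P_{C}(w')$ are King tableaux whose row words are $\equiv_{C}$-equivalent to $w$, so the uniqueness clause applied to $w$ forces them to coincide. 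This buys you something real: you never need to check that each elementary move --- in particular a type A move $K_{1}$--$K_{4}$ --- preserves the Berele insertion tableau, a point the paper dismisses as trivial but which is not immediate from the definitions, since Berele insertion differs from Schensted insertion; on the other hand, the paper's move-by-move argument is independent of Theorem~\ref{t4} for this direction, while yours uses that theorem's full strength (existence and uniqueness). For the reverse implication the two proofs coincide: both chain $w\equiv_{C}\row P_{C}(w)=\row P_{C}(w')\equiv_{C}w'$. Your side remark that $\equiv_{C}$ is a genuine equivalence relation (with $K_{1}/K_{2}$ and $K_{3}/K_{4}$ mutually inverse and $K_{5}$ symmetric by definition) is the right point to flag, since both the chaining and the symmetric use of Theorem~\ref{t4} depend on it.
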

\begin{proof}
$(\then:)$ 
It is enough to show $P_{C}(w)=P_{C}(w') $ if 
$w'$ is obtained from $w$ by $K_{j}$ transformation $(j=1, 2, 3, 4, 5)$. 
For $j=1, 2, 3, 4$, this is trivial since 
$w\equ_{A} w'$ implies 
$w\rr w'$.
Now suppose 
$w'$ is obtained from $w$ by $K_{5}$.
As seen above, we must have 
$P_{C}(w)=P_{C}(w')$ in such a case.
$(\reflectbox{$\then$})$:
Thanks to Theorem \ref{t4}, we have 
\[
w\rr \row P(w)=\row P(w')\rr w'.
\]
\end{proof}
\begin{ex}
Example \ref{exb} shows nothing but 
$53\ol{4}{\ol{2}}\ol{2}6124\ol{1}\kc
53\ol{2}\ol{4}61\ol{1}4$.
\end{ex}
\begin{rmk}
Sundaram \cite[Theorem 11.2]{sun1} has implicitly  described a relation between type A and type C Knuth equivalences: let $w, w'$ be words of the same length. 
She showed that $w\ka w'$ if and only if 
$w\kc w'$ and 
$(\ku\stackrel{\mathcal{B}}{\longleftarrow}w)_{3}=
(\ku\stackrel{\mathcal{B}}{\longleftarrow}w')_{3}$ (certain Littlewood-Richardson tableaux in her notation). 
%
%
\end{rmk}

\section{RSK correspondences of type A, C}

%
%

In this section, we establish 
RSK correspondence for King tableaux, one of our main results. 
Before the main discussion, we 
briefly review type A RSK correspondence.

\subsection{type A RSK\ok}

\begin{defn}
A \kyo{Knuth array} $w$ of length $n$ is 
a two-line array 
\[
w=
\left(\begin{array}{ccccc}
	u_{1}&\cd&u_{n}   \\
	v_{1}&\cd&v_{n}
\end{array}\right)
\]
such that $u_{j}\in [l], v_{j}\in [k]$ with the following 
lexicographic conditions.
\begin{enumerate}
	\item $u_{j}\le u_{j+1}$.
	\item $u_{j}=u_{j+1}\then v_{j}\le v_{j+1}$.
\end{enumerate}
Denote the set of such arrays by 
$\A_{k}^{l}(n)$. Further, set $\A_{k}^{l}=\cup_{n\ge0}\A_{k}^{l}(n)$. 
Say a Knuth array of length $n$ is a word if 
its top row is $1, \ds, n$.
\end{defn}

\begin{rmk}
Given $w\in \A_{k}^{l}$, 
Let $A(i, j)$ be the number of 
pairs 
$\left(\begin{smallmatrix}	i\\
	j
\end{smallmatrix}\right)$ in $w$. 
Together, $(A(i ,j))$ forms an $l$ by $k$ matrix.
Thus, we can naturally identify 
$\A_{k}^{l}$ and 
$M_{k}^{l}(\zz_{\ge0})$, the set of $l$ by $k$ 
matrices.
\end{rmk}

For $w$ as above, 
start with 
$(P_{1}, Q_{1})=
\k{\fb{$v_{1}$}\,, \fb{$u_{1}$}}$. Let 
$P_{j}=P_{j-1}\from v_{j}$ 
and 
$Q_{j}$ be the SSYT by adding $\fb{$u_{j}$}$ 
to $Q_{j-1}$ at position of the box $P_{j}\sm P_{j-1}$.
Finally, set $(P(w), Q(w))=(P_{n}, Q_{n})$.

\begin{thm}[type A RSK correspondence]
\[
\Phi=\Phi_{A}:\A_{k}^{l}\to \bigcup_{\el(\l)\le k, l} \ssyt_{k}(\l)\ti \ssyt_{l}(\l)
\]
\[
w\mto (P(w), Q(w))
\]
is a bijection.
\end{thm}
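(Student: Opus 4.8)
The plan is to construct an explicit inverse to $\Phi_A$ by reverse row-insertion, exactly as in the classical argument for words, but now tracking the extra data stored in the top row of the array. First I would check that $\Phi_A$ is well defined, i.e. that $P(w)$ and $Q(w)$ are genuinely semistandard of a common shape $\l$. That $P(w)$ is semistandard is immediate from the row-insertion theory already used in Section~2, and both tableaux have the same shape since each step adds exactly one box to each. The only real content is the semistandardness of $Q(w)$. This follows from the \emph{row bumping lemma}, which compares the bumping routes of two consecutive insertions $P\from v_{j}$ and $P\from v_{j+1}$: when $u_{j}=u_{j+1}$ (so the lexicographic condition forces $v_{j}\le v_{j+1}$), the later new box lies strictly to the right of and weakly above the earlier one. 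Hence the boxes carrying a fixed label in $Q(w)$ form a horizontal strip, which gives weak increase along rows and strict increase down columns among equal labels; when $u_{j}<u_{j+1}$ the remaining semistandardness conditions across label-blocks are then automatic.

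Next I would build the inverse map $\Psi$. Given $(P,Q)\in\ssyt_{k}(\l)\ti\ssyt_{l}(\l)$, locate the largest entry $u$ appearing in $Q$; among the boxes of $Q$ holding $u$, choose the one in the rightmost column, which by the horizontal-strip structure is the box added last. Reverse-bump the box of $P$ in that position: this ejects a letter $v\in[k]$ and deletes one box from $P$, while we simultaneously delete the chosen box from $Q$. Recording the column $\binom{u}{v}$ and iterating until both tableaux are empty produces a two-line array $\Psi(P,Q)$. I would then verify that $\Psi(P,Q)$ lies in $\A_{k}^{l}$: reading the extracted columns in reverse order of extraction, the top entries are weakly increasing (since we peel off the largest $u$ first), and for a fixed top value the rule ``rightmost box first'' ejects the $v$'s so that, once the array is read left to right, they are weakly increasing, which is condition~(2).

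Finally I would check $\Psi\circ\Phi_{A}=\mathrm{id}$ and $\Phi_{A}\circ\Psi=\mathrm{id}$. Both reduce to the fundamental fact that reverse row-insertion undoes forward row-insertion: a single forward insertion followed by the reverse insertion at the newly created box returns the original tableau together with the original inserted letter. The compatibility of the selection rule in $\Psi$ with the order of insertions in $\Phi_{A}$ is exactly the horizontal-strip observation from the first step, so the two procedures run step for step in opposite order. The main obstacle is precisely the row bumping lemma and the bookkeeping it entails—ensuring that equal labels in $Q$ are removed in the correct rightmost-first order and that this exactly preserves the lexicographic conditions. An alternative route would deduce the statement from Theorem~\ref{t1} by standardizing the biword, applying the type~A RS bijection, and destandardizing the recording tableau; I expect the direct reverse-insertion argument to be cleaner and more self-contained.
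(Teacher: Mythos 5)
Your proposal is correct, but there is nothing in the paper to compare it against: the paper states this theorem as classical background with no proof at all (it is Knuth's theorem, and the paper defers all type~A facts to Fulton's book, cited as \cite{fu}). What you have written is essentially Fulton's standard argument, i.e.\ the very proof the paper implicitly relies on: well-definedness of $Q(w)$ via the Row Bumping Lemma (equal labels in $Q$ occupy a horizontal strip, added left to right), the inverse built by reverse row-insertion with the ``largest label, rightmost box first'' extraction rule, and the two composites being identities because reverse insertion undoes forward insertion step by step. The one place where your sketch leans on an unstated fact is the verification that $\Psi(P,Q)$ satisfies the lexicographic condition~(2): you need the converse direction of the Row Bumping Lemma, namely that if $B'$ is strictly right of and weakly above $B$, then reverse-bumping first from $B'$ and then from $B$ ejects letters $x'$ and $x$ with $x\le x'$; this is also in Fulton and should be cited or proved rather than absorbed into ``the bookkeeping.'' Your alternative route via standardization of the biword and Theorem~\ref{t1} would also work, and is in fact closer in spirit to how this paper handles the type~C case (Theorem~\ref{t5}), where the bijection is deduced from Berele's correspondence by standardizing the top row and the recording object.
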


One application of this correspondence is \eh{Cauchy identity} as we are going to see. Let $\x=(x_{1}, \ds, x_{k})$
and $\y=(y_{1}, \ds, y_{l})$ 
be variables. 
For a tableau $T$ of entries $[k]$, 
recall that $T(m, n)$ is the entry of $T$ at row $m$ and column $n$. Define \kyo{weight} of $T$ as
\[
\x^{T}=\prod_{m, n} x_{T(m, n)}.
\]
\newcommand{\tes}{\te{s}}
\begin{defn}
The \kyo{Schur polynomial} 
for $\l$ is 
\[
\tes_{\l}(\x)=
\sum_{T\in \ssyt_{k}(\l)}\x^{T}.
\]
For $w\in \A_{k}^{l}$, define its 
\kyo{type A weight} as $m_{A}(w)=\x^{P_{A}(w)}\y^{Q_{A}(w)}$. 
\end{defn}

\begin{fact}
\[
\prod_{i=1}^{l}
\prod_{j=1}^{k}(1-x_{j}y_{i})^{-1}
=
\sum_{\l}\tes_{\l}(\x)\tes_{\l}(\y).
\]
\end{fact}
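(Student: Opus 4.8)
The plan is to prove the identity by expanding the left-hand side combinatorially and then invoking the type A RSK correspondence as a weight-preserving bijection. First I would expand each factor as a geometric series $(1-x_{j}y_{i})^{-1}=\sum_{a_{ij}\ge0}(x_{j}y_{i})^{a_{ij}}$ and multiply over all $1\le i\le l$, $1\le j\le k$. This yields
\[
\prod_{i=1}^{l}\prod_{j=1}^{k}(1-x_{j}y_{i})^{-1}=\sum_{(a_{ij})}\prod_{i,j}x_{j}^{a_{ij}}y_{i}^{a_{ij}},
\]
where the sum ranges over all $l\times k$ matrices $(a_{ij})$ with entries in $\zz_{\ge0}$. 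By the remark identifying $\A_{k}^{l}$ with $M_{k}^{l}(\zz_{\ge0})$, this is exactly a sum over Knuth arrays $w\in\A_{k}^{l}$.

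The next step is to recognize each summand as the type A weight $m_{A}(w)$. For the array $w$ corresponding to $(a_{ij})$, the bottom row contains exactly $\sum_{i}a_{ij}$ copies of $j$ and the top row exactly $\sum_{j}a_{ij}$ copies of $i$, so the monomial $\prod_{i,j}x_{j}^{a_{ij}}y_{i}^{a_{ij}}$ equals $\prod_{j}x_{v_{j}}\prod_{j}y_{u_{j}}$, the product of $\x$ over the bottom row times $\y$ over the top row. Since Schensted insertion preserves content, $P_{A}(w)$ has the same content as the bottom row and $Q_{A}(w)$ the same content as the top row; hence this monomial is precisely $\x^{P_{A}(w)}\y^{Q_{A}(w)}=m_{A}(w)$. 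Therefore the left-hand side equals $\sum_{w\in\A_{k}^{l}}m_{A}(w)$.

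Finally I would apply the type A RSK correspondence, which is a bijection $w\mapsto(P(w),Q(w))$ onto $\bigcup_{\l}\ssyt_{k}(\l)\ti\ssyt_{l}(\l)$ (the constraint $\el(\l)\le k,l$ being automatic, since the relevant tableau sets are empty otherwise). Re-summing over the image, and using that the bijection is weight-preserving by the previous paragraph, gives
\[
\sum_{w\in\A_{k}^{l}}\x^{P_{A}(w)}\y^{Q_{A}(w)}
=\sum_{\l}\Bigl(\sum_{P\in\ssyt_{k}(\l)}\x^{P}\Bigr)\Bigl(\sum_{Q\in\ssyt_{l}(\l)}\y^{Q}\Bigr)
=\sum_{\l}\tes_{\l}(\x)\,\tes_{\l}(\y),
\]
where each inner sum is the Schur polynomial by definition. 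This establishes the identity, understood as an equality of formal power series so that convergence is not at issue. The only genuine point requiring verification is the weight-preservation of RSK—that insertion and recording faithfully track the contents of the two rows—but this is immediate from the definitions of $P_{A}$ and $Q_{A}$, so I expect no serious obstacle; the remainder is bookkeeping.
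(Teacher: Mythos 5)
Your proof is correct and matches the approach the paper itself relies on: the paper quotes this identity as a known Fact and immediately afterward (and again in the proof of Corollary \ref{c1}) carries out the same computation, expanding the product as a sum over matrices in $M_{k}^{l}(\zz_{\ge0})\cong\A_{k}^{l}$ and invoking the weight-preserving type A RSK bijection $w\mapsto(P_{A}(w),Q_{A}(w))$. Your only additions are the explicit verification of weight preservation and the observation that the row-bound on $\el(\l)$ is automatic, both of which are sound.
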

It follows from the RSK correspondence that 
this infinite product is the generating function of 
the weight.
\[
\prod_{i=1}^{l}
\prod_{j=1}^{k}(1-x_{j}y_{i})^{-1}
=
\sum_{\l}\tes_{\l}(\x)\tes_{\l}(\y)
\]
\[=
\sum_{\l}\sum_{(T, S)\in \ssyt_{k}(\l)\ti \ssyt_{l}(\l)
}\x^{T}\y^{S}
=\sum_{w\in \A_{k}^{l}}m_{A}(w).
\]
Now, let us consider some special case 
$k\mto \ol{k}, l\mto k$. 
Then for each $w\in \A_{\olk}^{k}$, we have 
\[
(P_{A}(w), Q_{A}(w))\in \ssyt_{\olk}(\l)\ti \ssyt_{k}(\l)
\]
for a partition $\l$ (and necessarily $\el(\l)\le k$). Using variables 
$x_{1}, \ds, x_{\olk}$ with specialization 
$x_{\ol{j}}=x_{j}^{-1}$, 
$\x^{\pm}=(x_{1},\ds, x_{k}^{-1})$
and $\y=(y_{1}, \ds, y_{k})$, we have 
\[
\prod_{i, j=1}^{k}(1-x_{j}y_{i})^{-1}
(1-x_{j}^{-1}y_{i})^{-1}
=
\sum_{\el(\l)\le k}\tes_{\l}(\x^{\pm})\tes_{\l}(\y)
=
\sum_{w\in \A_{\olk}^{k}}m_{A}(w).
\]
In fact, there is another way to expand 
this infinite product by type C RSK correspondence. Again, we need an appropriate 
$Q$-symbol for this idea as we will explain with details.

\subsection{semistandard oscillating tableaux\ok}


A skew shape is a \kyo{horizontal strip} if it contains at most one box at each column.
\begin{ex}
Gray boxes indicate it.
\ytableausetup{mathmode, boxsize=15pt}
\begin{ytableau}
	\mb{}&\mb{}&\mb{}&\mb{}&\mb{}&\lg{}\\
	\mb{}&\mb{}&\lg{}&\lg{}\\
	\lg{}
\end{ytableau}	
\end{ex}

%
%

%

\begin{defn}
The second kind of a $k$-\kyo{semistandard oscillating tableau} ($k$-SSOT) of the final shape $\l$ is a finite sequence $S=(S^{1}, S^{'2}, S^{2}, S^{'3}, \ds, S^{' k}, S^{k})$ of partitions such that 
\begin{enumerate}
\item $S^{i}\supseteq S^{'i+1}$ $(1\le i\le k-1)$
and $S^{'i}\sub S^{i}$ ($2\le i\le k$). Moreover, 
$S^{i} \backslash S^{'i+1}$, 
$S^{i} \backslash S^{'i}$ 
are horizontal strips (possibly empty).
\item $S^{k}=\l$.
\end{enumerate}
Denote by $\ssot_{k}(\l)$ the set of all such SSOTs.
\end{defn}

More intuitively, 
this is a sequence of 
partitions starting at $S^{'1}=\ku$ and ending at $\l$
such that 
at $S^{i}\supseteq S^{'i+1}$, as the first part of a step $i$, we delete several boxes so that those form a horizontal strip and at $S^{'i}\sub S^{i}$, as the second part of the step $i$, we add several boxes similarly. It is a good idea to think that we either add or delete one box at one substep. At an addition step, we add boxes from \kyo{left} and 
at a deletion step, we delete them from \kyo{right}. 
Each partition in such $S$ must have at most $k$ rows since at each step $S^{'i}\sub S^{i}$, 
we add at most one box to columns. 
\begin{rmk}
This idea is based on S.J.Lee \cite{lee}. 
However, this definition is slightly different from his. 
He allows it to end such a sequence with deleting boxes at the final step; taking his to be the first kind, let us call ours the second kind to avoid  unnecessary confusion. The purpose of this modification is to 
construct the bijective RSK correspondence with Berele insertion. We will see in Lemma \ref{lsun} that 
Berele deletions occur only on an \kyo{initial segment} 
of $i$'s in the top row of a lexicographic array.
\end{rmk}

\begin{ex}\label{ex1}
\[
\ytableausetup{mathmode, boxsize=15pt}
S=
\k{
\,\yd{2}\,, 
\yd{1}\,, 
\yd{3,1}\,, 
\yd{1,1}\,,
\yd{2,1}\,
}
\]
is a 3-SSOT.
\end{ex}

There is a convenient way 
to compactly express an SSOT $S$ within one multiset-valued tableau. 
Every time a box is added or deleted at a substep of $S$, we record its \kyo{step number} into a tableau (of possibly larger than the final shape) at the same position. 
For example, we can encode 
the above $S$ as 
\[
\ytableausetup{mathmode, boxsize=15pt}
\k
{\,\begin{ytableau}
	1&1
\end{ytableau}\,, 
\begin{ytableau}
	\mb{}&\lg{2}
\end{ytableau}\,, 
\begin{ytableau}
	\mb{}&2&2\\2
\end{ytableau}\,, 
\begin{ytableau}
	\mb{}&\lg{3}&\lg{3}\\
	\mb{}
\end{ytableau}\,, 
\begin{ytableau}
	\mb{}&3\\
	\mb{}
\end{ytableau}\,}.
\]
All together, we identify $S$ with 
\[
\ytableausetup{mathmode, boxsize=38pt}
\begin{ytableau}
	1&12233&{23}\\
	2\\
\end{ytableau}
\]
as we just write down elements of a multiset in a box; the notation $S(l, m)$ means the multiset 
at $(l, m)$ position in $S$. 
The total number $n$ of letters in an SSOT $S$ is its \eh{length}. It is essential to understand that there is always the canonical numbering on those letters, say $(u_{1}, \ds, u_{n})$, $u_{1}\le \cd \le u_{n}$ and 
$u_{j}\in S(l, m)$ if we either add or delete 
a box at $(l, m)$ position at a substep $j$ in a step $u_{j}$. 
Write $(S)=(u_{1}, \ds, u_{n})$ and call it the \kyo{content} of $S$.
In the above example, 
\[
(S)=
(u_{1}, \ds, u_{9})=
(1, 1, 2, 2, 2, 2, 3, 3, 3).
\]


Notice that an OT of length $n$ can be expressed as an SSOT with its content $(1, \ds, n)$; in this sense, an OT was indeed a \kyo{standard} OT.
For example, 
\[
\ytableausetup{mathmode, boxsize=15pt}
S=
\k{\,
\yd{1}\,, 
\yd{2}\,, 
\yd{1}\,, 
\yd{1,1}\,, 
\yd{2,1}\,,
\yd{3,1}\,,
\yd{2,1}\,,
\yd{1,1}\,,
\yd{2,1}\,
}
\]
\[
\te{corresponds to}\q
\ytableausetup{mathmode, boxsize=38pt}
\begin{ytableau}
	1&23589&{67}\\
	4\\
\end{ytableau}.
\]



%

\subsection{type C RSK}

\begin{defn}
From 
\[
w=
\left(\begin{array}{ccccc}
	u_{1}&\cd&u_{n}   \\
	v_{1}&\cd&v_{n}
\end{array}\right)
\in \A_{\olk}^{l},
\]
we construct a pair of tableaux of the same shape 
\[
(P(w), Q(w))\in \text{KT}_{k}(\l)\ti \te{SSOT}_{ln}(\l)
\]
for some $\l$ ($\el(\l)\le k, l$) as follows. 
Start with 
$(P_{0}(w), Q_{0}(w))=
\k{\ku, \ku}$.
For $j\ge 1 $, let 
\[
P_{j}(w)=P_{j-1}(w)\from v_{j}
\qte{(Berele insertion)}
\]
and 
$Q_{j}(w)$
be the multiset-valued tableau with placing $u_{j}$ into the box which we added or deleted at $P_{j-1}(w)\from v_{j}$. Finally, set 
$(P_{C}(w), Q_{C}(w))=(P_{n}(w), Q_{n}(w))$ 
as we will verify in Lemma \ref{lsun} that 
$Q_{C}(w)$ is an SSOT.
Call $P_{C}(w)$ its \kyo{inserting tableau} and 
$Q_{C}(w)$ \kyo{recording tableau}. 
\end{defn}
\begin{ex}
Let $k=2, l=5, n=11$,
\[\ytableausetup{mathmode, boxsize=18pt}
w=
\left(\begin{array}{ccccccccccc}
	1&1&1&2&\bf{3}&\bf{3}&\bf{4}&4&4&\bf{5}&5   \\
	\oli&2&\olii&2&1&\oli&1&1&\oli&1&\ol{2}
\end{array}\right).
\]
We find that
\[
\begin{ytableau}
	\oli
\end{ytableau}
\to 
\begin{ytableau}
	\oli&2
\end{ytableau}
\to 
\begin{ytableau}
	\oli&2&\olii
\end{ytableau}
\to
\begin{ytableau}
	\oli&2&2\\
	\olii
\end{ytableau}
\to
\begin{ytableau}
	2&2\\
	\olii
\end{ytableau}
\]
\[
\to 
\begin{ytableau}
	\oli&2\\
\end{ytableau}
\to 
\begin{ytableau}
	2
\end{ytableau}
\to
\begin{ytableau}
	1\\
	2
\end{ytableau}
\to 
\begin{ytableau}
	1&\oli\\
	2
\end{ytableau}
\to 
\begin{ytableau}
	1\\
	2
\end{ytableau}
\to 
\begin{ytableau}
	1&\ol{2}\\
	2
\end{ytableau}
\]
as bold letters at the top row indicate steps for deletion.
Thus, 
\[
\ytableausetup{mathmode, boxsize=38pt}
(P(w), Q(w))
=
\k{\,
\begin{ytableau}
	1&\ol{2}\\
	2
\end{ytableau}
, \,\,
\begin{ytableau}
	1&14455&13\\
	234
\end{ytableau}
\,}.
\]
\end{ex}

\begin{lem}\label{lho}
Let $T$ be a KT and $x=x_{1}\cd x_{n}$ be a row word 
$(i.e., x_{j}\le x_{j+1})$. Consider 
successive Berele insertions 
\[
T\from x_{1} \from \cd \from x_{n}.
\]
\begin{enumerate}
\item Suppose each 
insertion $\from x_{j}$ adds a box. 
Then all added boxes form a horizontal strip (appearing from \kyo{left}).
\item Suppose each 
insertion $\from x_{j}$ deletes a box. 
Then all deleted boxes form a horizontal strip 
(disappearing from \kyo{right}).
\item Suppose 
$\from x_{n}$ deletes a box. 
Then, in fact, so do all $\from x_{j}$ $(1\le j\le n-1)$.
\end{enumerate}
\end{lem}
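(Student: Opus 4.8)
The plan is to treat all three parts through a single engine: two monotonicity principles for successive row-insertions of a weakly increasing word, plus one structural invariant of Berele insertion. The invariant is that \emph{the letter fed into row $r$ is always $\ge r$ in the symplectic order} — this is exactly what keeps the output King, and it forces a trigger at row $i$ to feed in precisely the unbarred $i$. The two monotonicity inputs are the classical Row Bumping Lemma \cite{fu} for the pair $a\le b$ (insert $a$, then $b$ into the result): the route of $b$ is strictly right of that of $a$, no longer than it, and the new box of $b$ is strictly right of and weakly above that of $a$; together with its companion for bumped values, that the letter $b$ feeds into each row is $\ge$ the letter $a$ feeds into that row. Part (1) is then immediate: if no insertion deletes, each $\from_{C}$ coincides with $\from_{A}$, so the whole computation is an ordinary type A insertion of a weakly increasing word, and the Row Bumping Lemma sends the new boxes strictly right and weakly up — hence into distinct columns, from left to right, a horizontal strip.

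For (3) I would prove the equivalent local statement that \emph{additions propagate forward}: if $\from x_j$ adds, then $\from x_{j+1}$ adds; iterating, the deletions occupy an initial segment, so if $\from x_n$ deletes then so does every $\from x_j$. Let $U$ be the tableau before $\from x_j$, and suppose $\from x_j$ adds, so $U':=U\from_{C}x_j=U\from_{A}x_j$. Assume for contradiction that $\from x_{j+1}$ deletes, triggering at row $i$: the letter fed into row $i$ of $U'$ is the unbarred $i$, and row $i$ of $U'$ contains $\ol i$. Up to its trigger row, $x_{j+1}$ runs as a type A insertion into $U'=U\from_{A}x_j$, so the Row Bumping Lemma applies to $(x_j,x_{j+1})$; since $x_{j+1}$'s route reaches row $i$ and is no longer than $x_j$'s, the route of $x_j$ reaches row $i$ as well. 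Writing $\alpha_i,\beta_i=i$ for the letters fed into row $i$, value monotonicity gives $\alpha_i\le\beta_i=i$ while the invariant gives $\alpha_i\ge i$, so $\alpha_i=i$. Now $U$ and $U'$ agree in row $i$ except at the single column where $x_j$'s route sits, and $x_{j+1}$ bumps its $\ol i$ strictly to the right of that column (strict route monotonicity), in the region where $U=U'$. Since nothing lies between $i$ and $\ol i$ in the symplectic order, the leftmost entry of row $i$ of $U$ exceeding $i$ is itself $\ol i$; but that is exactly what $x_j$ bumps when fed $i$ into row $i$, so $\from x_j$ triggers a deletion — contradicting that it was an addition.

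Part (2) is the mirror statement, and the plan is to run the dual comparison. When $\from x_j$ deletes, it performs the type A cascade, vacates one box in the trigger row $i_j$, and slides the hole down-right by jeu de taquin to a corner $p_j$. I want to show that for $x_j\le x_{j+1}$ the vacated corners satisfy $p_{j+1}$ strictly left of $p_j$, so they lie in distinct columns and disappear from the right, i.e.\ form a horizontal strip. A cleaner route is to use that Berele insertion is a bijection (Theorem \ref{t3}) and reverse time: the reverse of a block of deletions is a block of box-\emph{additions} carried out while extracting the reversed (hence weakly decreasing) word, and a reverse-insertion analogue of part (1) forces those added boxes into a left-to-right horizontal strip — which is precisely (2) read forward.

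I expect the genuine obstacle to be (2): controlling how the jeu de taquin slide paths of consecutive deletions interleave, since a deletion alters the tableau non-locally and the Row Bumping comparison governs only the insertion phase, not the sliding phase. I would attack this either by establishing a strict nesting of successive slide paths directly, or by making the reverse-insertion reduction above fully rigorous so that (2) is inherited from the already-clean statement (1). By contrast, (1) and (3) need only careful bookkeeping of the single-column discrepancy between $U$ and $U'$ and the value-monotonicity companion of the Row Bumping Lemma, both routine once the symplectic invariant $\ge r$ is in place.
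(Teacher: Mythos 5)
Your parts (1) and (3) are sound. Once you observe that an addition-only Berele insertion is literally a type A insertion, (1) is exactly the Row Bumping Lemma of \cite{fu}; and your propagation argument for (3) --- the route comparison forcing $x_{j}$'s insertion to feed a letter into row $i$, value monotonicity plus the symplectic invariant forcing that letter to be exactly $i$, and the single-column discrepancy between $U$ and $U'$ transporting the $\ol{i}$ back into row $i$ of $U$ --- is a correct, self-contained proof. Note that the paper itself proves none of this directly: it simply cites \cite[Lemmas 3.15, 10.4, 10.3]{sun1}, so your approach is genuinely different in character (self-contained versus outsourced).

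The genuine gap is part (2), and you have named it yourself without closing it: neither of your two strategies is carried out, and neither is as close to finished as your write-up suggests. The direct strategy needs a strict left-nesting statement for the corners vacated by successive deletions, but a Berele deletion consists of a bumping phase followed by a jeu de taquin slide, and the slide modifies the tableau along its entire path; the Row Bumping comparison you exploited in (3) controls only the bumping phase, so you have no tool that compares the slide path of $\from x_{j+1}$ with that of $\from x_{j}$ (the second slide runs through a tableau already rearranged by the first). The reverse-time strategy has a more basic defect: the inverse of a Berele deletion is not a Berele insertion but an \emph{extraction} (reverse jeu de taquin into the vacated corner, then reverse row-bumping out through row 1), so part (1) does not apply to the reversed process at all; the ``reverse-insertion analogue of (1)'' is a new lemma which would itself require precisely the path-interleaving analysis you are trying to avoid. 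As it stands, one third of the lemma is unproved; to finish you must either establish the nesting of slide paths directly (this is in substance Sundaram's \cite[Lemma 10.4]{sun1}) or formulate and prove the extraction analogue of (1) rigorously.
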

\begin{proof}
This is a consequence of 
\cite[Lemma 3.15, 10.4, 10.3]{sun1}.
\end{proof}

\begin{lem}\label{lsun}
Let 
$w=
\left(\begin{array}{ccccc}
	u_{1}&\cd&u_{n}   \\
	v_{1}&\cd&v_{n}
\end{array}\right)\in \A_{\olk}^{l}$.
Suppose 
$u_{p-1}<u_{p}=\cd =u_{r}=i<u_{r+1}$
and 
consider the successive insertions 
\[
((P(v_{1}\cd v_{p-1})\from v_{p})\from \cd )\from v_{r}.
\]
Then, exactly one of the following happens:
\begin{enumerate}
\item For each $j$ ($p\le j\le r$), 
$P(v_{1}\cd v_{j-1})\from v_{j}$ adds a box.
\item There exists some $q$ $(p\le q\le r)$ such that 
each 
\[
P(v_{1}\cd v_{j-1})\from v_{j}
\]
deletes a box for $p\le j\le q$ and 
\[
P(v_{1}\cd v_{j-1})\from v_{j}
\]
adds it for $q+1\le j\le r$. 
\end{enumerate}
Moreover, all deleted boxes in the step $i$ 
form a horizontal strip in $Q(v_{1}\cd v_{q})$ 
while added boxes also form it in $Q(v_{1}\cd v_{r})$. 
As a consequence, $Q_{C}(w)$ is an 
$l$-SSOT of length $n$. 
\end{lem}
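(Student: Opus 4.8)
The plan is to analyze the block of insertions coming from a single value $i$ in the top row and then to patch these blocks together. Fix $p\le q\le r$ with $u_{p}=\cdots=u_{r}=i$ as in the statement. By the lexicographic condition on a Knuth array, the associated letters satisfy $v_{p}\le v_{p+1}\le\cdots\le v_{r}$, so $v_{p}\cdots v_{r}$ is a row word and so is every one of its sub-runs; this is exactly the hypothesis needed to apply Lemma \ref{lho}. I also record at the outset that a single Berele insertion $\from v_{j}$ changes the number of boxes by exactly $\pm1$: either it is an ordinary type A insertion that adds one box, or it is the symplectic-cancellation case of Example \ref{exb}, where one empty box is created and slid out by jeu de taquin, removing one box. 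Hence each insertion unambiguously adds or deletes a box, and the dichotomy of the statement amounts to saying that, inside the block, the deletions occupy an initial segment.

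First I would settle this dichotomy, which I expect to be the \emph{main obstacle}. Put $q:=\max\{\,j:\ p\le j\le r,\ P(v_{1}\cdots v_{j-1})\from v_{j}\ \text{deletes a box}\,\}$ when some insertion in the block deletes, and otherwise call the block deletion-free (this is case (1)). Assume $q$ exists, and apply Lemma \ref{lho} to the row word $v_{p}\cdots v_{q}$ inserted into the King tableau $P(v_{1}\cdots v_{p-1})$: its last insertion $\from v_{q}$ deletes, so part (3) of Lemma \ref{lho} forces each of $\from v_{p},\ldots,\from v_{q}$ to delete as well. By maximality of $q$, none of $\from v_{q+1},\ldots,\from v_{r}$ deletes, so each of them adds. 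This is exactly case (2), and the two cases are plainly mutually exclusive and exhaustive.

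Granting the dichotomy, the two horizontal-strip claims are direct consequences of the remaining parts of Lemma \ref{lho}. The deleting sub-run $v_{p}\cdots v_{q}$, inserted into $P(v_{1}\cdots v_{p-1})$, consists only of deletions, so by part (2) the deleted boxes disappear from the right and their positions form a horizontal strip, recorded with the common label $i$ in $Q(v_{1}\cdots v_{q})$. Symmetrically, the adding sub-run $v_{q+1}\cdots v_{r}$, inserted into $P(v_{1}\cdots v_{q})$, consists only of additions, so by part (1) the added boxes appear from the left and form a horizontal strip, recorded in $Q(v_{1}\cdots v_{r})$.

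Finally I would assemble the global conclusion. Write $0=r_{0}<r_{1}<\cdots<r_{l}=n$ for the block boundaries, so that step $i$ is $\{r_{i-1}+1,\ldots,r_{i}\}$, and let $q_{i}$ be the breakpoint found above (with $q_{i}=r_{i-1}$ when step $i$ is deletion-free). Set $S^{i}:=\sh P(v_{1}\cdots v_{r_{i}})$ and $S^{'i}:=\sh P(v_{1}\cdots v_{q_{i}})$, with $S^{'1}=\ku$. The step-$i$ analysis yields precisely $S^{i-1}\supseteq S^{'i}$ with $S^{i-1}\sm S^{'i}$ a horizontal strip (the deletions) and $S^{'i}\sub S^{i}$ with $S^{i}\sm S^{'i}$ a horizontal strip (the additions); here step $1$ is deletion-free since the very first insertion is into $\ku$ and must add, so $S^{1}\sm S^{'1}=S^{1}$ is a single row. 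Each $P(v_{1}\cdots v_{m})$ is a King tableau and hence has at most $k$ rows, while $S^{l}=\sh P_{C}(w)=\l$ is the final shape. Therefore $(S^{1},S^{'2},S^{2},\ldots,S^{'l},S^{l})$ meets the defining conditions of an $l$-SSOT, and since exactly one letter is recorded per insertion it has length $n$; thus $Q_{C}(w)$ is an $l$-SSOT of length $n$, as claimed.
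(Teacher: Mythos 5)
Your proof is correct and follows essentially the same route as the paper: the paper's entire proof of Lemma \ref{lsun} is the one-line observation that it follows from Lemma \ref{lho}, and your argument is exactly the fleshed-out version of that reduction (maximal deletion index $q$ plus part (3) for the dichotomy, parts (1)--(2) for the two horizontal strips, then block-by-block assembly into the SSOT conditions). The only cosmetic quibble is that your block boundaries $r_{0}<r_{1}<\cdots<r_{l}$ should be weak inequalities to allow values of $[l]$ absent from the top row, which changes nothing in the argument.
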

\begin{proof}
This follows from Lemma \ref{lho}.
\end{proof}

\begin{defn}
By $\ssot_{k,n}(\l)$ we mean the set of $k$-SSOTs of shape $\l$, length $n$. 
Let 
$S\in \ssot_{k,n}(\l)$ with $(S)=(u_{1}, \ds, u_{n})$. 
 Define its \kyo{standardization} $S^{\st}\in \ot_{k,n}(\l)
$ by replacing $u_{j}$ by $j$. 
\end{defn}
\begin{ex}
\ytableausetup{mathmode, boxsize=28pt}
\[
S=
\begin{ytableau}
	1&1&12\\
	2&344\\
	3\\
\end{ytableau}, \q
S^{\st}=
\begin{ytableau}
	1&2&34\\
	5&789\\
	6\\
\end{ytableau}. 
\]
\end{ex}

\begin{defn}
Let $w=
\left(\begin{array}{ccccc}
	u_{1}&\cd&u_{n}   \\
	v_{1}&\cd&v_{n}
\end{array}\right)\in \A_{\olk}^{l}(n)$. 
Its \kyo{standardization} $w^{\st}$ 
is 
\[
w^{\st}=
\left(\begin{array}{ccccc}
	1&\cd&n   \\
	v_{1}&\cd&v_{n}
\end{array}\right) \in \A^{n}_{\olk}.
\]
\end{defn}
We often identify 
such an array $w^{\st}$ with a word $v_{1}\cd v_{n}$.

\begin{thm}[RSK correspondence of type C]
\label{t5}
\[
\Phi=\Phi_{C}:\A_{\olk}^{k}\to 
\bigcup_{\el(\l)\le k}
\te{KT}_{k}(\l)\ti \te{SSOT}_{k}(\l),
\q w\mto (P_{C}(w), Q_{C}(w))
\]
is a bijection.
\end{thm}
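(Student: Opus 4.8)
The plan is to reduce the statement to the already-established type C RS correspondence $\Phi_{\te{Ber}}$ (Theorem \ref{t3}) by means of standardization on the $Q$-side, precisely the ``semistandardization'' dual to Roby--Terada's standardization mentioned in the introduction. Throughout, for $w=\left(\begin{smallmatrix}u_1&\cdots&u_n\\ v_1&\cdots&v_n\end{smallmatrix}\right)\in\A_{\olk}^{l}$ I would lean on two compatibilities. First, Berele insertion of the $P$-tableau sees only the bottom row, so $P_C(w)=P_C(w^{\st})=P_C(v_1\cdots v_n)$. Second, standardizing the recording tableau recovers the oscillating tableau of $\Phi_{\te{Ber}}$, that is $Q_C(w)^{\st}=Q_{\te{Ber}}(v_1\cdots v_n)$, since both are literally the sequence of shapes $(\sh P_j)_{j=0}^{n}$ read off substep by substep. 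Well-definedness of $\Phi_C$ (that $Q_C(w)$ really is an $l$-SSOT of the same shape as $P_C(w)$) is exactly Lemma \ref{lsun}, so the output lands in the stated target. I would also record the elementary reconstruction observation that an $l$-SSOT $S$ is determined by the pair $\bigl((S),\,S^{\st}\bigr)$: the content $(S)$ says how many substeps carry each label and the standardization $S^{\st}$ says at which box each successive substep acts, so the multiset entries fill back in uniquely. This is what lets the two compatibilities be inverted.

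Injectivity then follows quickly. If $\Phi_C(w)=\Phi_C(w')$, then $Q_C(w)=Q_C(w')$ forces equal contents, hence equal top rows $u=u'$, and equal standardizations $Q_C(w)^{\st}=Q_C(w')^{\st}$; combined with $P_C(w)=P_C(w')$ and the two compatibilities this gives $\Phi_{\te{Ber}}(v_1\cdots v_n)=\Phi_{\te{Ber}}(v'_1\cdots v'_n)$, whence $v=v'$ by the injectivity half of Theorem \ref{t3}. Therefore $w=w'$.

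For surjectivity I would exhibit the inverse explicitly. Given $(P,Q)\in\te{KT}_{k}(\l)\times\te{SSOT}_{l}(\l)$ of length $n$, set $u=(Q)$ (a weakly increasing word in $[l]$), take $Q^{\st}\in\OT_{k,n}(\l)$, put $v_1\cdots v_n=\Phi_{\te{Ber}}^{-1}(P,Q^{\st})$ (legitimate since $(P,Q^{\st})$ lies in the codomain of $\Phi_{\te{Ber}}$, as $\el(\l)\le k$), and form $w=\left(\begin{smallmatrix}u_1&\cdots&u_n\\ v_1&\cdots&v_n\end{smallmatrix}\right)$. Once I know that $w\in\A_{\olk}^{l}$, the two compatibilities together with the reconstruction observation force $P_C(w)=P$ and $Q_C(w)=Q$, so $\Phi_C(w)=(P,Q)$ and surjectivity is complete.

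The hard part, and the only place where genuine content enters, is checking that this $w$ satisfies the second lexicographic condition, namely that within each maximal block $u_p=\cdots=u_r=i$ the letters $v_p,\ldots,v_r$ are weakly increasing; the first condition is automatic because $u=(Q)$ is weakly increasing. For the second I need the converse of Lemma \ref{lho}: a block of step $i$ in $Q$ consists of a right-justified deletion horizontal strip followed by a left-justified addition horizontal strip, and I must show that the successive Berele insertions realizing this shape evolution are fed a weakly increasing word. Lemma \ref{lho} supplies the forward implication; to obtain the converse I would run the reverse-bumping Berele procedure one block at a time, keeping the deletion and addition phases in their canonical order, and argue — as in the type A row-bumping lemma, now through the Sundaram lemmas underlying Lemma \ref{lho} — that a descent $v_j>v_{j+1}$ inside a block would force two changed boxes in a common column, contradicting the horizontal-strip shape of that step. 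I expect this reverse-insertion analysis within a single step $i$ to be the main obstacle; everything else is bookkeeping organized by the standardization square.
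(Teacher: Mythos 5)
Your proposal is correct and follows essentially the same route as the paper: both proofs reduce to Berele's RS correspondence (Theorem \ref{t3}) through the standardization square, with the identical injectivity argument (recover the top row from $Q_{C}(w)$ entry by entry, then standardize and invoke injectivity of $\Phi_{\te{Ber}}$) and the identical surjectivity construction ($u=(S)$, $v=\Phi_{\te{Ber}}^{-1}(T,S^{\st})$, stack $u$ over $v$). The only divergence is the step you call the hard part --- verifying that the reconstructed array $w$ satisfies the second lexicographic condition --- which the paper does not address at all (it asserts $Q(w)=S$ and concludes, in effect deferring this converse of Lemma \ref{lho} to Sundaram's lemmas), so your caution is warranted; but if you do write it out, note that a descent $v_{j}>v_{j+1}$ between two additions forces the later box only weakly left of and strictly below the earlier one, so the contradiction you want is with the canonical left-to-right (addition) and right-to-left (deletion) substep ordering encoded in $S^{\st}$, not with the horizontal-strip shape itself, which two such boxes in distinct columns would not violate.
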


\begin{proof}
It is enough to show that 
for each integer $n\ge1$,
\[
\Phi:\A_{\olk}^{k}(n)\to 
\bigcup_{{\el(\l)\le k
}
}
\te{KT}_{k}(\l)\ti \te{SSOT}_{kn}(\l),
\q 
w\mto (P_{C}(w), Q_{C}(w))
\]
is a bijection.

If $n=1$, then 
$w=
\left(\begin{array}{ccccc}
	u_{1}\\
v_{1}
\end{array}\right)$, 
$\Phi(w)=\k{\fb{$v_{1}$}, \fb{$u_{1}$}}$ is certainly a bijection.
Suppose $n\ge2$. 
Let 
\[
w=
\left(\begin{array}{ccccc}
	u_{1}&\cd&u_{n}   \\
	v_{1}&\cd&v_{n}
\end{array}\right), 
w'=
\left(\begin{array}{ccccc}
	u_{1}'&\cd&u_{n}'   \\
	v_{1}'&\cd&v_{n}'
\end{array}\right)\in \A_{\olk}^{k}(n).
\]
For convenience, let 
\[
w_{0}=
\left(\begin{array}{ccccc}
	u_{1}&\cd&u_{n-1}   \\
	v_{1}&\cd&v_{n-1}
\end{array}\right), 
w_{0}'=
\left(\begin{array}{ccccc}
	u_{1}'&\cd&u_{n-1}'   \\
	v_{1}'&\cd&v_{n-1}'
\end{array}\right),
\]
$v=v_{1}\cd v_{n} $
and $v'=v'_{1}\cd v'_{n} $. To show that $\Phi$ is injective, assume 
$\Phi(w)=\Phi(w')$.
Then, in particular, $Q(w)=Q(w')$.
Now look at its box which contains $u_{n}$ and $u_{n}'$. These are maximal entries of this box 
so that $u_{n}=u_{n}'$.
Removing it from $Q(w)=Q(w')$, we see $Q(w_{0})=Q(w_{0}')$.
With the same algorithm, we can show that 
$u_{n-1}=u_{n-1}'$ and so on. Thus, 
\[
(u_{1}, \ds, u_{n})=
(u'_{1}, \ds, u'_{n}).
\]
Now $Q(w)=Q(w')$ implies 
$Q(w)^{\st}=Q(w')^{\st}$ so that 
\[
\Phi_{\te{Ber}}(v)
=
(P(w^{\st}), Q(w^{\st}))
=
(P(w), Q(w)^{\st})
\]
\[=
(P(w'), Q(w')^{\st})
=
(P(w'^{\st}), Q(w'^{\st}))
=\Phi_{\te{Ber}}(v').
\]
Berele's RS correspondence shows $v=v'$ and hence 
$w=w'$.

It remains to show that 
$\Phi$ is surjective.
Let $(T, S)\in\kt_{k}(\l)\ti \ssot_{kn}(\l)$ 
and $(S)=(u_{1}, \ds, u_{n})$.
Consider $S^{\st}\in \ot_{kn}(\l)$. For a pair $(T, S^{\st})$, there exists a unique $v=v_{1}\ds v_{n}\in [\olk]^{*}$ by Berele's RS correspondence 
such that 
\[
(P(v), Q(v))=(T, S^{\st})\in \kt_{k}(\l)\ti \te{OT}_{kn}(\l).
\]
Now define 
\[
w=
\left(\begin{array}{ccccc}
	u_{1}&\cd&u_{n}   \\
	v_{1}&\cd&v_{n}
\end{array}\right)
\]
so that $Q(w)=S$.
Conclude that 
\[
\Phi(w)=(P(w), Q(w))=
(P(v), Q(w))=(T, S).
\]
\end{proof}

\begin{rmk}
In fact, this is essentially a reformulation of Sundaram's discussion  \cite[p.120-127]{sun1}. Her correspondence deals with certain triples including a Burge array or a  lattice permutation, though.
\end{rmk}

\subsection{duality of Cauchy identity\ok}

We have seen before the expansion of 
Cauchy identity
\[
\prod_{i, j=1}^{k}(1-x_{j}y_{i})^{-1}
(1-x_{j}^{-1}y_{i})^{-1}
=
\sum_{\el(\l)\le k}\tes_{\l}(\x^{\pm})\tes_{\l}(\y).
\]
Let us now see hidden duality of this product 
as a consequence of 
our RSK correspondence. 
\begin{defn}
For $T\in \kt_{k}(\l)$, define 
$\x^{T}=\prod_{m, n}x_{T(m, n)}$ 
where $x_{\ol{j}}=x_{j}^{-1}$ and 
\kyo{symplectic Schur function (Laurent polynomial)} by 
\[
\tsp_{\l}(\x^{\pm})=
\sum_{T\in \kt_{k}(\l)}\x^{T}.
\]
\end{defn}

\begin{defn}
Let $\l$ be a partition, $n\ge |\l|$ 
such that $\el(\l)\le k$, $\hou{n}{|\l|}{2}.$ 
For $S\in \ssot_{k,n}(\l)$, 
recall that $S(l, m)$ is the multiset in $S$ at row $l$ and column $m$ and it consists of unbarred letters.
Define its \kyo{weight}
\[
\y^{S}=
\prod_{l, m}\prod_{j\in S(l, m)}y_{j}.
\]
This is a monomial of degree $n$. Define the \kyo{SSOT polynomial} of $\l$ of degree $n$ as 
\[
\ss_{\l, n}(\y)=
\sum_{S\in \ssot_{k,n}(\l)}\y^{S}
\]
and the \kyo{SSOT function (formal power series)} is 
\[
\ss_{\l}(\y)=
\sum_{S\in \ssot_{k}(\l)}\y^{S}.
\]
\end{defn}
\begin{rmk}
If $n=|\l|$, 
$\ssot_{k, |\l|}(\l)=\ssyt_{k}(\l)$ so that 
\[
\ss_{\l,|\l|}(\y)=\tes_{\l}(\y).
\]
\end{rmk}

For an SSOT $S$, 
let $S^{\red}$, its \kyo{reduction}, be the SSYT 
with only maxima of boxes on its final shape. 
Then $S_{1}\sim S_{2}\iff 
S_{1}^{\red}=S_{2}^{\red}$ naturally defines an equivalent relation 
on $\ssot_{k,n}(\l)$.
\begin{ex}
Let $\l=(2, 1)$, 
$k=3$. 
In fact, all eight SSYTs of $\l$ give 
$\tes_{\l}(\y)=(y_{1}+y_{2})(y_{1}+y_{3})(y_{2}+y_{3})$
and 
\[
\tes_{\l, 5}(\y)=
(y_{1}y_{2}+y_{1}y_{3}+y_{2}y_{3})
\tes_{\l}(\y)
\]
as we see all 24 SSOTs of 
length 5 of shape $\l$ (with eight equivalent classes) below.
\[
\ytableausetup{mathmode, boxsize=23pt}
\mb{}\hf
\begin{ytableau}
	1&1&12\\2\\
\end{ytableau}
\q
\mb{}
\begin{ytableau}
	1&1&13\\2\\
\end{ytableau}
\q
\begin{ytableau}
	1&1&23\\2\\
\end{ytableau}
\q
\begin{ytableau}
	1&1\\2&23\\
\end{ytableau}
\q
\]
\[
\begin{ytableau}
	1&1&12\\3\\
\end{ytableau}
\q
\begin{ytableau}
	1&1&13\\3\\
\end{ytableau}
\q
\begin{ytableau}
	1&1&23\\3\\
\end{ytableau}
\q
\begin{ytableau}
	1&1\\233\\
\end{ytableau}
\]
\[
\begin{ytableau}
	1&122\\2\\
\end{ytableau}
\q
\begin{ytableau}
	1&2&23\\2\\
\end{ytableau}
\]
\[
\begin{ytableau}
	1&122\\3\\
\end{ytableau}
\q
\begin{ytableau}
	1&2&23\\3\\
\end{ytableau}
\q
\begin{ytableau}
	1&2\\233\\
\end{ytableau}
\]
\[
\begin{ytableau}
	1&123\\2\\
\end{ytableau}
\q
\begin{ytableau}
	1&233\\2\\
\end{ytableau}
\q
\begin{ytableau}
	1&133\\2\\
\end{ytableau}
\]
\[
\begin{ytableau}
	1&123\\3\\
\end{ytableau}
\q
\begin{ytableau}
	1&233\\3\\
\end{ytableau}
\q
\begin{ytableau}
	1&3\\233\\
\end{ytableau}
\q
\begin{ytableau}
	1&133\\3\\
\end{ytableau}
\]
\[
\begin{ytableau}
	122&2\\3\\
\end{ytableau}
\q
\begin{ytableau}
	2&2&23\\3\\
\end{ytableau}
\]
\[
\begin{ytableau}
	122&3\\3\\
\end{ytableau}
\q\begin{ytableau}
	2&233\\3\\
\end{ytableau}
\]
\end{ex}

\begin{cor}[duality of Cauchy identity]\label{c1}
\[
\sum_{\el(\l)\le k}
\tes_{\l}(\x^{\pm})\tes_{\l}(\y)=
\prod_{i, j=1}^{k}(1-x_{j}y_{i})^{-1}
(1-x_{j}^{-1}y_{i})^{-1}
=\sum_{\el(\l)\le k}
\tsp_{\l}(\x^{\pm})\ss_{\l}(\y).
\]
\end{cor}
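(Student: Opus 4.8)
The left-hand equality is the type A RSK expansion of the product already displayed above, so the plan is to establish only the right-hand equality, namely that the same infinite product expands as $\sum_{\el(\l)\le k}\tsp_{\l}(\x^{\pm})\ss_{\l}(\y)$ (an identity of formal power series, Laurent in the $x$'s and a power series in the $y$'s). The idea is to feed the \emph{same} generating function over Knuth arrays into the type C RSK bijection $\Phi_{C}$ of Theorem \ref{t5}, specialized to $l=k$, and to track how the weight factors. To this end, for $w\in\A_{\olk}^{k}$ with top row $u_{1},\ds,u_{n}\in[k]$ and bottom row $v_{1},\ds,v_{n}\in[\olk]$, I set
\[
m(w)=\prod_{j=1}^{n}x_{v_{j}}\,y_{u_{j}}, \q x_{\ol{j}}=x_{j}^{-1}.
\]
Identifying $\A_{\olk}^{k}$ with the set of $k\times 2k$ nonnegative integer matrices and summing the resulting independent geometric series, I obtain
\[
\sum_{w\in\A_{\olk}^{k}}m(w)
=\prod_{i=1}^{k}\prod_{c\in[\olk]}(1-x_{c}y_{i})^{-1}
=\prod_{i,j=1}^{k}(1-x_{j}y_{i})^{-1}(1-x_{j}^{-1}y_{i})^{-1},
\]
which is precisely the middle product of the statement.

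The heart of the argument is to show that $\Phi_{C}$ turns $m(w)$ into $\x^{P_{C}(w)}\y^{Q_{C}(w)}$. The recording side is immediate: each $u_{j}$ is placed into exactly one box of the SSOT $Q_{C}(w)$, so $\y^{Q_{C}(w)}=\prod_{j}y_{u_{j}}$ by the definition of the SSOT weight. The inserting side is the crux, and I claim that under the specialization $x_{\ol{j}}=x_{j}^{-1}$ one has
\[
\x^{P_{C}(w)}=\prod_{j=1}^{n}x_{v_{j}}.
\]
I would prove this by induction on $n$, checking that each Berele step $P_{j-1}(w)\from_{C}v_{j}$ multiplies the symplectic weight by exactly $x_{v_{j}}$. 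If the step adds a box this is the ordinary type A identity, since row insertion changes the multiset of entries only by adjoining the single letter $v_{j}$. If the step deletes a box I invoke the alternative description from Example \ref{exb}: the Berele step agrees with the type A insertion of $v_{j}$ followed by removing the symplectic-violating domino $\{i,\ol{i}\}$ (for the level $i$ at which the violation occurs) from rows $i$ and $i+1$, after which jeu de taquin only relocates entries; since $x_{i}x_{\ol{i}}=x_{i}x_{i}^{-1}=1$, the removed domino contributes trivially, so the net weight change is again $x_{v_{j}}$. This cancellation is exactly what the specialization $x_{\ol{j}}=x_{j}^{-1}$ is engineered to produce, and it is the step I expect to demand the most care.

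Granting the weight identity, I assemble the pieces. Pushing $\sum_{w\in\A_{\olk}^{k}}m(w)=\sum_{w}\x^{P_{C}(w)}\y^{Q_{C}(w)}$ through the bijection $\Phi_{C}$ and regrouping by the common shape $\l$ gives
\[
\sum_{w\in\A_{\olk}^{k}}m(w)
=\sum_{\el(\l)\le k}\Bigl(\sum_{T\in\kt_{k}(\l)}\x^{T}\Bigr)\Bigl(\sum_{S\in\ssot_{k}(\l)}\y^{S}\Bigr)
=\sum_{\el(\l)\le k}\tsp_{\l}(\x^{\pm})\ss_{\l}(\y),
\]
by the definitions of $\tsp_{\l}$ and $\ss_{\l}$. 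Comparing with the matrix computation of the first paragraph yields the right-hand equality, and together with the already-established left-hand equality this proves the corollary. The only genuine obstacle is the deletion case of the weight-preservation lemma; once the domino cancellation is in hand, the remainder is bookkeeping exactly parallel to the classical type A Cauchy proof, now run through $\Phi_{C}$ instead of $\Phi_{A}$.
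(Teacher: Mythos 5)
Your proposal is correct and follows essentially the same route as the paper: expand the product as a sum of monomials over Knuth arrays in $\A_{\olk}^{k}$, then regroup via the type A RSK bijection for the left equality and via the type C RSK bijection $\Phi_{C}$ of Theorem \ref{t5} for the right equality. The one place you go beyond the paper's (very terse) proof is in spelling out the weight-preservation for Berele deletions --- that a deletion removes a domino $\{i,\ol{i}\}$ whose contribution cancels under $x_{\ol{i}}=x_{i}^{-1}$ --- which the paper leaves implicit in the single displayed chain of equalities; your induction handles exactly the right point.
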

\begin{proof}
\[
\te{LHS}=
\sum_{\el(\l)\le k}
\sum_{(T, S)\in \ssyt_{\olk}(\l)\ti 
\ssyt_{k}(\l)}\x^{T}\y^{S}
\]
\[=
\sum_{A\in M_{\olk}^{k}(\zz_{\ge0})}
\prod_{i, j}(x_{j}y_{i})^{A(i, j)}
=
\sum_{\el(\l)\le k}
\sum_{(T, S)\in \kt_{k}(\l)\ti 
\ssot_{k}(\l)}\x^{T}\y^{S}
=\te{RHS}.
\]
\end{proof}

\begin{rmk}
Jae-Hoon Kwon kindly pointed out (private communication) that 
we can also show this Cauchy type identity 
as a special case of \cite[Theorem 7.10]{kwon} 
with a symplectic analog of RSK for a spinor model.
\end{rmk}

%
%
%
%

\subsection{Bender-Knuth involution for SSOTs}

As Bender-Knuth proved \cite{bek}, all $s_{\l}(\x)$ are symmetric. 
Their idea was to construct certain involution  (\kyo{Bender-Knuth involution}) as we explain  here. Fix a letter $i$. Say a box $\fb{$i$}$ $\k{\fb{$i+1$}}$ 
in an SSYT $T$ 
is \kyo{frozen} 
if 
there is \fb{$i+1$} $\k{\fb{$i$}}$ 
right below (above) it.
Non-frozen boxes 
$\fb{$i$}, \fb{$i+1$}$
are \kyo{mutable}.
In each row $r$ of $T$, 
letters in mutable boxes form a sequence 
$i^{m_{r}}(i+1)^{n_{r}}$, $m_{r}, n_{r}\ge 0$.
Replacing this by 
$i^{n_{r}}(i+1)^{m_{r}}$ and 
fixing all other entries, we get another tableau (necessarily an SSYT). Apparently, this map is an involutive. 
For example, for $i=3$:
\[
\ytableausetup{mathmode, boxsize=15pt}
\begin{ytableau}
	1&1&1&2&2&2&3&\lg 4\\
	2&2&3&\lg 3&\lg 4&\lg 4&4\\
	\lg 3&\lg 3&4\\
\end{ytableau}
\lr 
\begin{ytableau}
	1&1&1&2&2&2&3&\lg 3\\
	2&2&3&\lg 3&\lg 3&\lg 4&4\\
	\lg 4&\lg 4&4\\
\end{ytableau}
\]
We can now abstractly state their result as follows.
\begin{lem}
Let $1\le i\le k-1$ and $\wt_{i}T$ be the total number of $i$ in a tableau $T$. 
There exists an involution 
\[
f=f_{i}:\ssyt_{k}(\m)\to \ssyt_{k}(\m)
\]
such that 
\[
(\wt_{i}f(T), \wt_{i+1}f(T))
=(\wt_{i+1}T, \wt_{i}T)
\]
and for all $j\ne i, i+1$, $\wt_{j}f(T)=\wt_{j}T$.
\end{lem}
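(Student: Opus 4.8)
The plan is to verify that the explicit map $f_i$ described just above the statement---freeze each $i$ sitting directly over an $i+1$ and each $i+1$ sitting directly under an $i$, then in every row replace the run of mutable letters $i^{m_r}(i+1)^{n_r}$ by $i^{n_r}(i+1)^{m_r}$---is well defined (i.e.\ lands in $\ssyt_{k}(\m)$), is an involution, and realizes the asserted weight swap. The involutivity and weight statements will follow quickly once well-definedness is in hand, so the real content is to show that $f_i(T)$ is again semistandard of shape $\m$.

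First I would pin down the local structure forced by column-strictness. Since columns are strictly increasing and $i,i+1$ are consecutive integers, any column containing both must contain them in vertically adjacent boxes, with $i$ immediately above $i+1$; these are exactly the frozen pairs, and conversely every frozen box belongs to such a pair. Consequently, after deleting the frozen boxes, \emph{each column carries at most one mutable box}. Next I would show that within a single row the mutable letters are contiguous: comparing row $r$ with row $r+1$, column-strictness forces the $(i+1)$'s of row $r+1$ to sit under entries $\le i$, which places the frozen $i$'s on the \emph{left} of the $i$-run of row $r$; the symmetric comparison of rows $r-1$ and $r$ places the frozen $(i+1)$'s on the \emph{right} of the $(i+1)$-run. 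Hence the mutable $i$'s occupy the right end of the $i$-run and the mutable $(i+1)$'s the left end of the adjacent $(i+1)$-run, so together they form a single block $i^{m_r}(i+1)^{n_r}$ and the row operation is unambiguous.

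With this structure, checking $f_i(T)\in\ssyt_{k}(\m)$ is short. Rows stay weakly increasing because each altered block is still sorted and its row-neighbours are $\le i$ on the left and $\ge i+1$ on the right. For columns, I would observe that the upper and lower neighbours of any mutable box lie \emph{outside} $\{i,i+1\}$: a neighbour equal to $i$ or $i+1$ would either make the box frozen or violate column-strictness. Thus each mutable box has an upper neighbour $\le i-1$ and a lower neighbour $\ge i+2$, so relabelling it by either $i$ or $i+1$ preserves strictness; since frozen boxes are untouched, $f_i(T)$ is semistandard of shape $\m$. The same neighbour analysis shows the frozen/mutable classification of $f_i(T)$ agrees with that of $T$ (no two mutable boxes share a column, so no new frozen pair can appear), whence applying $f_i$ twice sends $(m_r,n_r)\mapsto(n_r,m_r)\mapsto(m_r,n_r)$ and $f_i$ is an involution. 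Finally the weight claim is pure bookkeeping: each frozen pair contributes one $i$ and one $i+1$ and is fixed, while the mutable totals $\sum_r m_r$ and $\sum_r n_r$ are exchanged, giving $\wt_{i}f(T)=\wt_{i+1}T$, $\wt_{i+1}f(T)=\wt_{i}T$, and $\wt_{j}f(T)=\wt_{j}T$ for $j\ne i,i+1$.

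The hard part will be the structural claim of the second paragraph---that the mutable boxes of each row form a single contiguous run, and the accompanying fact that column-strictness survives the relabelling. Everything else is formal once this is established.
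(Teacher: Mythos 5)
Your proof is correct and takes essentially the same approach as the paper: it verifies the very map (frozen versus mutable boxes, swapping $i^{m_{r}}(i+1)^{n_{r}}$ to $i^{n_{r}}(i+1)^{m_{r}}$ row by row) that the paper describes just before the lemma and attributes to Bender--Knuth without further justification. You merely supply the well-definedness, involutivity, and weight-swap checks that the paper leaves implicit, so the two arguments coincide in substance.
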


Guti\'{e}rrez (2024) \cite{gu} proved that 
$\tsp_{\l}(\x^{\pm})$ is $B_{2k}$-symmetric where 
$B_{2k}$ means the set of signed permutations:
\[
B_{2k}=\{w:[\olk]\to [\olk]\mid 
w \te{ is a bijection}, w(\ol{i})=\ol{w(i)}\}.
\]
Guti\'{e}rrez's proof is also due to construction of  Bender-Knuth involution for KTs. 

\begin{lem}\label{lab}
Let $1\le i\le k-1$. 
There exists an involution 
\[
g=g_{i}:\ssot_{k,n}(\l)\to \ssot_{k,n}(\l)
\]
such that 
\[
(\wt_{i}g(S), \wt_{i+1}g(S))
=(\wt_{i+1}S, \wt_{i}S)
\]
and $\wt_{j}g(S)=\wt_{j}S$ for all $j\ne i, i+1$.
\end{lem}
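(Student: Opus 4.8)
The plan is to construct $g_i$ as a Bender--Knuth style switch that is \emph{localized to the two consecutive steps} $i$ and $i+1$. Since $g_i$ must fix $\wt_j S$ for every $j\ne i,i+1$ and preserve the final shape $S^{k}=\l$, I would design it to change only the three intermediate partitions $S'^{i},S^{i},S'^{i+1}$, keeping $\alpha:=S^{i-1}$ and $\gamma:=S^{i+1}$ fixed (with the boundary convention $S^{0}=S'^{1}=\ku$, so that for $i=1$ one takes $\alpha=\ku$ and the step-$1$ deletion is vacuous). Everything then reduces to the following local statement: given partitions $\alpha,\gamma$, build an involution on the triples $(\beta_1,\delta,\beta_2)$ with $\alpha\supseteq\beta_1\sub\delta\supseteq\beta_2\sub\gamma$, all four skew shapes horizontal strips, which exchanges
\[
N_1:=|\alpha\sm\beta_1|+|\delta\sm\beta_1|\q\text{and}\q N_2:=|\delta\sm\beta_2|+|\gamma\sm\beta_2|.
\]
Replacing $(S'^{i},S^{i},S'^{i+1})$ inside the window keeps the whole sequence an SSOT, and since $N_1+N_2$ is unchanged the length $n=\sum_j\wt_jS$ and the shape $\l$ are automatically preserved; as $N_1=\wt_iS$ and $N_2=\wt_{i+1}S$, this gives exactly the desired exchange.

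First I would recast the local problem in terms of column heights. Writing $a_c,b_c,d_c,e_c,f_c$ for the heights of column $c$ in $\alpha,\beta_1,\delta,\beta_2,\gamma$, the horizontal-strip conditions say precisely that consecutive heights differ by at most one in the fixed down--up--down--up pattern $a_c\ge b_c\le d_c\ge e_c\le f_c$, while the requirement that $\beta_1,\delta,\beta_2$ be genuine partitions says $b,d,e$ are each weakly decreasing in $c$. The contribution of column $c$ to $N_1$ is $(a_c-b_c)+(d_c-b_c)$ and to $N_2$ is $(d_c-e_c)+(f_c-e_c)$, with the endpoints $a_c,f_c$ frozen. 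A short case analysis over the finitely many profiles separates the \emph{frozen} columns, where the pair of contributions is already balanced (equal to $(0,0)$, $(1,1)$ or $(2,2)$; for instance $f_c-a_c=\pm2$), from the \emph{mutable} columns, where it is unbalanced: the pure-addition/pure-deletion profiles $(0,1),(1,0)$, the mixed profiles $(1,2),(2,1)$, and $(0,2),(2,0)$.

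The heart of the construction, mirroring the row-wise swap $i^{m}(i+1)^{n}\mapsto i^{n}(i+1)^{m}$ of the classical involution, is then to process the mutable columns in maximal runs and reassign their step-$i$ and step-$(i+1)$ activity so that the two totals are interchanged, using the SSOT convention that additions enter from the \emph{left} and deletions leave from the \emph{right} to pin down a canonical, monotone choice within each run. I would then check, profile by profile, that the reassigned heights $b,d,e$ remain weakly decreasing in $c$, that all four skew shapes stay horizontal strips, that the column contributions to $N_1$ and $N_2$ have indeed been swapped, and that applying the rule a second time recovers the original triple.

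The main obstacle is this last verification. Unlike the classical Bender--Knuth involution, which toggles a \emph{single} intermediate shape and has only one species of mutable box, here three intermediate partitions $S'^{i},S^{i},S'^{i+1}$ must stay partitions \emph{simultaneously}, and several distinct mutable profiles (pure-addition, pure-deletion, and the mixed ones) can occur and interleave along a run. Guaranteeing that the swap keeps all three height sequences weakly decreasing across columns---the cross-column coupling with no analogue in the one-shape case---while remaining an honest involution is the delicate point; I expect the left/right convention to be exactly what forces the canonical choice and hence makes the map well defined and self-inverse. Once this local involution is established, applying it inside the window bounded by $S^{i-1}$ and $S^{i+1}$ defines $g_i$ on all of $\ssot_{k,n}(\l)$ with the stated effect on weights.
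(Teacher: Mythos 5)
Your reduction of the lemma to a local problem is fine as far as it goes: fixing $\alpha=S^{i-1}$ and $\gamma=S^{i+1}$, replacing only the triple $(S'^{i},S^{i},S'^{i+1})$ does keep the sequence an SSOT, preserves $n$ and $\l$, and identifies $\wti S=N_{1}$, $\wt_{i+1}S=N_{2}$. But the proof stops exactly where the work begins. The ``heart of the construction'' --- the rule that reassigns step-$i$ and step-$(i+1)$ activity along runs of mutable columns --- is never defined, and you yourself concede that the decisive verification (that the three height sequences stay weakly decreasing across columns, that all four skew shapes remain horizontal strips, that $N_{1}$ and $N_{2}$ are exchanged, and that the map squares to the identity) is only an expectation. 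Note that what you are asserting is strictly stronger than the lemma: the existence of a \emph{local} involution is equivalent to the symmetry of the joint distribution of $(N_{1},N_{2})$ over all admissible triples for \emph{every} fixed pair $(\alpha,\gamma)$, and nothing in your proposal establishes that symmetry. The classical Bender--Knuth argument does not transfer, for precisely the reason you identify (one toggles a single intermediate shape there, versus three coupled intermediate partitions here), so invoking the ``left/right convention'' is not a substitute for a construction. As written, this is a plan, not a proof.

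For contrast, the paper's proof sidesteps all local combinatorics by conjugating the classical type A involution through the two RSK correspondences. Given $S$, choose $T\in\kt_{k}(\l)$ and let $w\in\A_{\olk}^{k}(n)$ be the unique array with $\Phi_{C}(w)=(T,S)$ (Theorem \ref{t5}); set $(P,Q)=\Phi_{A}(w)$, apply the type A Bender--Knuth involution $f_{i}$ to $Q$ alone, pull back $w'=\Phi_{A}^{-1}(P,f_{i}(Q))$, and define $g(S)=Q_{C}(w')$. Two facts make this legitimate: first, $P_{A}(w')=P_{A}(w)$ means the bottom words are type A Knuth equivalent, hence type C Knuth equivalent, so $P_{C}(w')=P_{C}(w)=T$ by Corollary \ref{ct1}, which is why $g(S)$ again lies in $\ssot_{k,n}(\l)$; second, the weights of $Q_{A}(w)$ and $Q_{C}(w)$ are both read off from the top row of $w$, so the swap $(\wti,\wt_{i+1})\mapsto(\wt_{i+1},\wti)$ performed by $f_{i}$ transfers verbatim to $g$. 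Involutivity then costs nothing, since every arrow in the diagram is a bijection and $f_{i}$ is an involution. If you wish to pursue your route, be aware that a correct completion would yield a genuinely stronger statement (an involution fixing every partition of the sequence outside the window), but it requires constructing and verifying the local matching that your proposal currently leaves open.
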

To understand our proof, 
it is helpful to keep this diagram in mind:
\begin{center}
\begin{minipage}[c]{.8\tw}
\xymatrix@!=15mm{
*{
\begin{array}{ccc}
\kt_{k}(\l)\ti \ssot_{k,n}(\l)
\end{array}}
\ar@{<->}_{\te{id}\ti g}[rrr]
\ar@{<->}_{\tn{AC RSK}}[d]
&&&
*{
\begin{array}{ccc}
\kt_{k}(\l)\ti \ssot_{k,n}(\l)
\end{array}}
\ar@{<->}_{\tn{AC RSK}}[d]\\
*{
\begin{array}{ccc}
\ssyt_{\olk}(\m)\ti \ssyt_{k}(\m)
\end{array}}
\ar@{<->}_{\te{id}\ti f}[rrr]
&&&
*{
\begin{array}{ccc}
\ssyt_{\olk}(\m)\ti \ssyt_{k}(\m)
\end{array}}
}
\end{minipage}
\end{center}
\begin{proof}
Let 
$S\in \ssot_{k,n}(\l)$ and 
choose any $T\in \kt_{k}(\l)$.
For a pair $(T, S)$, one has 
\[
\Phi_{C}(w)=(T, S)
\]
for a unique $w\in \A_{\olk}^{k}(n)$.
Let 
\[
(P, Q)=\Phi_{A}(w)=
(P_{A}(w), Q_{A}(w)),
\]
say $\sh P=\sh Q=\m$ ($|\m|=n$). 
Let $f=f_{i}$ be the type A Bender-Knuth involution on 
$\ssyt_{k}(\m)$. Here we have 
\[
(\wt_{i}f(Q), \wt_{i+1}f(Q))=
(\wt_{i+1}Q, \wt_{i}Q).
\]
For a pair $(P, f(Q))$, 
find 
a unique $w'\in \A_{\olk}^{k}(n)$ such that 
$\Phi_{A}(w')=(P, f(Q))$.
Then 
$\Phi_{C}(w')=(T, Q_{C}(w'))$, $\sh Q_{C}(w')=\sh T=\l$. 
Define $g(S)=Q_{C}(w')$.
Then 
\[
(\wt_{i}g(S), 
\wt_{i+1}g(S))
=
(\wt_{i}f(Q), 
\wt_{i+1}f(Q))
=(\wt_{i+1}Q, 
\wt_{i}Q)
=
(\wt_{i+1}S, 
\wt_{i}S)
\]
and similarly for all $j\ne i, i+1$, 
\[
\wt_{j}g(S)=\wt_{j}f(Q)=\wt_{j}Q=\wt_{j}S.
\]
Notice that all maps in the diagram are bijections.
Hence, staring from $(T, g(S))$, we can go backward
\[
(T, g(S))
\too
(P, f(Q))
\too 
(P, f(f(Q)))=(P, Q)\too (T, S).\]
This implies $g$ is a bijection and $g(g(S))=S$.
%

%
%
\end{proof}

\begin{lem}\label{lsym}
All $\ss_{\l, n}(\y)$ are symmetric. 
\end{lem}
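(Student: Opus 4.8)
The plan is to deduce the symmetry of $\ss_{\l,n}(\y)$ directly from the weight-swapping involutions $g_{i}$ produced in Lemma \ref{lab}. Recall that a polynomial in $y_{1}, \ldots, y_{k}$ is symmetric precisely when it is fixed by every adjacent transposition $s_{i}$ swapping $i$ and $i+1$, for $1 \le i \le k-1$, since these generate the symmetric group $\mathfrak{S}_{k}$. So it suffices to show that $s_{i}$ fixes $\ss_{\l,n}(\y)$ for each such $i$.

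First I would record that the weight of an SSOT factors as a monomial indexed by the $\wt_{j}$-statistics: writing $\wt_{j} S$ for the total number of $j$'s appearing in $S$, we have $\y^{S} = \prod_{j=1}^{k} y_{j}^{\wt_{j} S}$. Applying $s_{i}$ (i.e.\ interchanging $y_{i}$ and $y_{i+1}$) to this monomial simply exchanges the exponents $\wt_{i} S$ and $\wt_{i+1} S$ and leaves the others fixed. By the defining property of $g_{i}$ in Lemma \ref{lab}, the resulting monomial is exactly $\y^{g_{i}(S)}$; that is, $s_{i}(\y^{S}) = \y^{g_{i}(S)}$.

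Next I would sum over all $S \in \ssot_{k,n}(\l)$. Since $g_{i}$ is a bijection (indeed an involution) of $\ssot_{k,n}(\l)$, reindexing the sum along $g_{i}$ gives
\[
s_{i}\bigl(\ss_{\l,n}(\y)\bigr) = \sum_{S} s_{i}(\y^{S}) = \sum_{S} \y^{g_{i}(S)} = \sum_{S'} \y^{S'} = \ss_{\l,n}(\y).
\]
As this holds for every $i$ with $1 \le i \le k-1$, and the $s_{i}$ generate $\mathfrak{S}_{k}$, the polynomial $\ss_{\l,n}(\y)$ is $\mathfrak{S}_{k}$-invariant, hence symmetric.

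There is essentially no obstacle here, since Lemma \ref{lab} already carries all the combinatorial content. The only points requiring a line of care are the translation of the weight-swap identity of $g_{i}$ into the monomial-level statement $s_{i}(\y^{S}) = \y^{g_{i}(S)}$, and the standard (but worth stating) fact that invariance under all adjacent transpositions upgrades to full symmetry because they generate $\mathfrak{S}_{k}$.
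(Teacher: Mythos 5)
Your proof is correct and follows essentially the same route as the paper: both reduce to invariance under the adjacent transpositions $\s_{i}$ (which generate $S_{k}$) and then invoke the involution $g_{i}$ of Lemma \ref{lab}. The only cosmetic difference is bookkeeping --- the paper splits the sum into $g$-fixed and non-fixed terms, whereas you reindex the entire sum at once via the identity $\s_{i}(\y^{S})=\y^{g_{i}(S)}$, which is if anything a cleaner way to organize the same argument.
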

\begin{proof}
Since transpositions 
$\s_{i}=(i, i+1)$ generate $S_{k}$, 
it is enough to show that for each $i$,
\[
\s_{i}\k{\ss_{\l, n}(\y)}=\ss_{\l, n}(\y)
\]
where $\s_{i}$ flips $y_{i}$ and $y_{i+1}$.
Lemma \ref{lab} implies that 
\[
\s_{i}\k{\ss_{\l, n}(\y)}
=
\s_{i}\k{
\sum_{g(S)\ne S}\y^{S}
+
\sum_{g(S)= S}\y^{S}
}
\]
\[=
\s_{i}\k{
\sum_{g(S)\ne S}\y^{S}
}
+
\s_{i}
\k{\sum_{
\substack{g(S)= S\\
\wt_{i} S=\wt_{i+1}S
}
}\y^{S}
}
=
\sum_{g(S)\ne S}\y^{S}
+
\sum_{g(S)= S}\y^{S}
=\ss_{\l, n}(\y).\]
\end{proof}
\begin{thm}\label{t6}
All $\ss_{\l}(\y)$ are symmetric.
\end{thm}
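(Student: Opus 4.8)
The plan is to deduce the theorem from Lemma~\ref{lsym} by decomposing the formal power series $\ss_{\l}(\y)$ into its homogeneous pieces and arguing degree by degree. First I would recall that every $S\in\ssot_{k}(\l)$ carries a well-defined length $n$, and that by construction its weight $\y^{S}$ is a monomial of total degree $n$; moreover $n$ ranges exactly over the integers with $n\ge|\l|$ and $n\equiv|\l|\pmod{2}$. Grouping the defining sum $\ss_{\l}(\y)=\sum_{S\in\ssot_{k}(\l)}\y^{S}$ according to this length therefore yields the homogeneous expansion
\[
\ss_{\l}(\y)=\sum_{n}\ss_{\l,n}(\y),
\]
where the sum runs over all $n\ge|\l|$ with $n\equiv|\l|\pmod{2}$, and in which $\ss_{\l,n}(\y)$ is precisely the degree-$n$ homogeneous component of the series.

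Next I would invoke Lemma~\ref{lsym}, which asserts that each polynomial $\ss_{\l,n}(\y)$ is symmetric in $y_{1},\dots,y_{k}$. Since the action of $S_{k}$ by permutation of the variables preserves total degree, a formal power series is fixed by $S_{k}$ if and only if each of its homogeneous components is; as every component $\ss_{\l,n}(\y)$ is $S_{k}$-invariant, so is the whole series $\ss_{\l}(\y)$, which is exactly the claim of Theorem~\ref{t6}.

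There is essentially no genuine obstacle beyond bookkeeping: the only point that needs care is the passage from polynomials to the formal power series, i.e.\ interpreting symmetry as invariance of each graded piece and checking that the $S_{k}$-action commutes with the grading by degree. Both are immediate once the homogeneous decomposition above is in place, so the real content of the statement is carried entirely by Lemma~\ref{lsym}---and hence, through the commuting square behind Lemma~\ref{lab}, by the type~A Bender--Knuth involution transported across the type~A and type~C RSK correspondences.
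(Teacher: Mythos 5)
Your proposal is correct and follows essentially the same route as the paper: the paper's own proof likewise decomposes $\ss_{\l}(\y)=\sum_{n}\ss_{\l,n}(\y)$ into its homogeneous (length-graded) pieces and applies Lemma~\ref{lsym} to each one. Your write-up merely makes explicit the (routine) point that the $S_{k}$-action preserves degree, which the paper leaves implicit in the phrase ``as a consequence.''
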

\begin{proof}
Lemma \ref{lsym} just showed all $\ss_{\l n}(\y)$ are  symmetric for each $n$, $n\ge|\l|, 
\hou{n}{|\l|}{2}$. As a consequence, so is 
\[
\ss_{\l}(\y)=
\sum_{n}\ss_{\l, n}(\y).
\]
\end{proof}

\subsection*{Note added in the proof.} 
After writing this manuscript, 
Hideya Watanabe told us that 
there is a representation-theoretical interpretation 
of our results 
via the quantum symmetric pairs of type AII (not  C); see \cite{wa} for details. 
Thus, our usage of type A \& C was a little abuse of language.

%





\begin{thebibliography}{99}
\bibitem{bek}
E. Bender, D. Knuth, Enumeration of plane partitions, J. Combin. Theory Ser. A 13 (1972), 40-54.
\bibitem{be} 
A. Berele, A Schensted-type correspondence for the symplectic group, J. Combin. Theory Ser. A 43 (1986), 320-328.
\bibitem{fu} W. Fulton, {\it Young tableaux}, London Mathematical Society Student Texts 35, Cambridge Univ. Press, Cambridge, 1997.
\bibitem{kwon}
T. Heo, J-H. Kwon, Combinatorial Howe duality of symplectic type, J. Algebra 600 (2022), 1-44.
\bibitem{gu}
A. Guti\'{e}rrez, 
Bender-Knuth Involutions for Types B and C, Electr. J.  Comb., vol.31, P2 59, 2024.
\bibitem{kn} 
M. Kashiwara, T. Nakashima,
Crystal Graphs for Representations of the $q$-Analogue of Classical Lie Algebras, J. Algebra 
165 (1994), 295-345.
\bibitem{ki} 
R. C. King, Weight multiplicities for the classical groups, in Lecture Notes in Physics 50, Springer-Verlag, New York, 1975, 490-499.
\bibitem{knu}
D. E. Knuth, Permutations, matrices and generalized Young tableaux, Pacific J. Math. 34 (1970), 709-727. 
\bibitem{le}C. Lecouvey, Schensted-type correspondence, plactic monoid, and jeu de taquin for type $C_n$, J. Algebra {247} (2002), 295-331.
\bibitem{lee}
S. J. Lee, Crystal Structure on King Tableaux and Semistandard Oscillating Tableaux, Transf. Groups  30 (2025), 823-853.
\bibitem{rt}
T. Roby, I. Terada, A two-dimensional pictorial presentation of Berele's insertion algorithm for symplectic tableaux, Electr. J.  Comb., vol. 12, R4, 2005.
\bibitem{sh} 
J.T. Sheats, A symplectic jeu de taquin bijection between the tableaux of King and of De Concini, Trans. Amer. Math. Soc. {351} (1999), 3569-3607.
\bibitem{sun1} 
S. Sundaram, On the combinatorics of representations of $Sp(2n, \cc)$, Massachusetts Institute of Technology, ph. D thesis, 1986.
\bibitem{sun2} 
S. Sundaram, The Cauchy identity for $Sp(2n)$,
J. Combin. Theory, Ser. A 53 (1990), 209-238.
\bibitem{wa}
H. Watanabe, Berele row-insertion and quantum symmetric pairs, {\texttt{arXiv:2509.00853v2}}.
\end{thebibliography}
\end{document}